\newtheorem{theorem}{Theorem}[section]
\newtheorem{prop}[theorem]{Proposition}
\newtheorem{defn}[theorem]{Definition}
\newtheorem{coro}[theorem]{Corollary}
\newtheorem{thm-def}[theorem]{Theorem-Definition}
\newtheorem{prop-def}[theorem]{Proposition-Definition}
\newtheorem{coro-def}[theorem]{Corollary-Definition}
\newtheorem{remark}[theorem]{Remark}
\newcommand{\nc}{\newcommand}
\nc{\tred}[1]{\textcolor{red}{#1}}
\nc{\tblue}[1]{\textcolor{blue}{#1}}
\nc{\tgreen}[1]{\textcolor{green}{#1}}
\nc{\tpurple}[1]{\textcolor{purple}{#1}}
\nc{\btred}[1]{\textcolor{red}{\bf #1}}
\nc{\btblue}[1]{\textcolor{blue}{\bf #1}}
\nc{\btgreen}[1]{\textcolor{green}{\bf #1}}
\nc{\btpurple}[1]{\textcolor{purple}{\bf #1}}
\renewcommand{\Bbb}{\mathbb}
\renewcommand{\frak}{\mathfrak}
\newcommand{\efootnote}[1]{}
\renewcommand{\textbf}[1]{}
\newcommand{\delete}[1]{}
\nc{\dfootnote}[1]{{}}          
\nc{\ffootnote}[1]{\dfootnote{#1}}
\nc{\mfootnote}[1]{\footnote{#1}} 
\nc{\ofootnote}[1]{\footnote{\tiny Older version: #1}} 
\nc{\mlabel}[1]{\label{#1}  
{\hfill \hspace{1cm}{\bf{{\ }\hfill(#1)}}}}
\nc{\mcite}[1]{\cite{#1}{{\bf{{\ }(#1)}}}}  
\nc{\mref}[1]{\ref{#1}{{\bf{{\ }(#1)}}}}  
\nc{\mbibitem}[1]{\bibitem[\bf #1]{#1}} 
\nc{\mlabel}[1]{\label{#1}}  
\nc{\mcite}[1]{\cite{#1}}  
\nc{\mref}[1]{\ref{#1}}  
\nc{\mbibitem}[1]{\bibitem{#1}} 
\nc{\sbar}{\, {\scriptstyle{|\hspace{-.08cm}|\hspace{-.08cm}|
\hspace{-.08cm}|\hspace{-.08cm}|\hspace{-.08cm}|
\hspace{-.08cm}|\hspace{-.08cm}|\hspace{-.08cm}|\hspace{-.08cm}|
\hspace{-.08cm}|\hspace{-.08cm}|\hspace{-.08cm}|
\hspace{-.08cm}|\hspace{-.08cm}|\hspace{-.08cm}|\hspace{-.08cm}|
\hspace{-.08cm}|\hspace{-.08cm}|\hspace{-.08cm}|
\hspace{-.08cm}|\hspace{-.08cm}|\hspace{-.08cm}|\hspace{-.08cm}|
\hspace{-.08cm}|\hspace{-.08cm}|\hspace{-.08cm}|
\hspace{-.08cm}|\hspace{-.08cm}|}\, }}
\nc{\wvec}[2]{{\scriptsize{\big [ \!\!
    \begin{array}{c} #1 \\ #2 \end{array} \!\! \big ]}}}
\nc{\lp}{\big ( }
\nc{\rp}{\big ) }
\nc{\lb}{\!\left \langle }
\nc{\rb}{\right \rangle\! }
\nc{\bin}[2]{ (_{\stackrel{\scs{#1}}{\scs{#2}}})}  
\nc{\binc}[2]{ \big (\!\! \begin{array}{c} \scs{#1}\\
    \scs{#2} \end{array}\!\! \big )}  
\nc{\bincc}[2]{  \left ( {\scs{#1} \atop
    \vspace{-1cm}\scs{#2}} \right )}  
\nc{\bs}{\bar{S}}
\nc{\cosum}{\sqsubset}
\nc{\la}{\longrightarrow}
\nc{\rar}{\rightarrow}
\nc{\dar}{\downarrow}
\nc{\dap}[1]{\downarrow \rlap{$\scriptstyle{#1}$}}
\nc{\uap}[1]{\uparrow \rlap{$\scriptstyle{#1}$}}
\nc{\defeq}{\stackrel{\rm def}{=}}
\nc{\disp}[1]{\displaystyle{#1}}
\nc{\dotcup}{\ \displaystyle{\bigcup^\bullet}\ }
\nc{\gzeta}{\bar{\zeta}}
\nc{\hcm}{\ \hat{,}\ }
\nc{\hts}{\hat{\otimes}}
\nc{\barot}{{\otimes}}
\nc{\free}[1]{\bar{#1}}
\nc{\uni}[1]{\tilde{#1}}
\nc{\hcirc}{\hat{\circ}}
\nc{\lleft}{[}
\nc{\lright}{]}
\nc{\curlyl}{\left \{ \begin{array}{c} {} \\ {} \end{array}
    \right .  \!\!\!\!\!\!\!}
\nc{\curlyr}{ \!\!\!\!\!\!\!
    \left . \begin{array}{c} {} \\ {} \end{array}
    \right \} }
\nc{\longmid}{\left | \begin{array}{c} {} \\ {} \end{array}
    \right . \!\!\!\!\!\!\!}
\nc{\ora}[1]{\stackrel{#1}{\rar}}
\nc{\ola}[1]{\stackrel{#1}{\la}}
\nc{\ot}{\otimes}
\nc{\mot}{{{\sbar}}}
\nc{\otm}{\bar{\sbar}}
\nc{\scs}[1]{\scriptstyle{#1}}
\nc{\subv}{{^{\star}}}
\nc{\cov}{{^{\sharp}}}
\nc{\mrm}[1]{{\rm #1}}
\nc{\dirlim}{\displaystyle{\lim_{\longrightarrow}}\,}
\nc{\invlim}{\displaystyle{\lim_{\longleftarrow}}\,}
\nc{\proofbegin}{\noindent{\bf Proof: }}
\nc{\proofend}{$\blacksquare$ \vspace{0.3cm}}
\nc{\sha}{{\mbox{\cyr X}}}  
\nc{\shap}{{\mbox{\cyrs X}}} 
\nc{\shpr}{\diamond}    
\nc{\shplus}{\shpr^+}
\nc{\shprc}{\shpr_c}    
\nc{\msh}{\ast}
\nc{\vep}{\varepsilon}
\nc{\labs}{\mid\!}
\nc{\rabs}{\!\mid}
\nc{\FG}{\mrm{FG}}
\nc{\fp}{\mrm{fp}} \nc{\rchar}{\mrm{char}} \nc{\Fil}{\mrm{Fil}}
\nc{\gmzvs}{gMZV\xspace}
\nc{\gmzv}{gMZV\xspace}
\nc{\mzv}{MZV\xspace}
\nc{\mzvs}{MZVs\xspace}
\nc{\MZV}{\mrm{MZV}}
\nc{\Hom}{\mrm{Hom}} \nc{\id}{\mrm{id}} \nc{\im}{\mrm{im}}
\nc{\incl}{\mrm{incl}} \nc{\map}{\mrm{Map}} \nc{\mchar}{\rm char}
\nc{\nz}{\rm NZ} \nc{\supp}{\mathrm Supp}
\nc{\Alg}{\mathbf{Alg}}
\nc{\Bax}{\mathbf{Bax}}
\nc{\bff}{\mathbf f}
\nc{\bfk}{{\bf k}}
\nc{\bfone}{{\bf 1}}
\nc{\bfx}{\mathbf x}
\nc{\bfy}{\mathbf y}
\nc{\base}[1]{\bfone^{\otimes ({#1}+1)}} 
\nc{\Cat}{\mathbf{Cat}}
\nc{\detail}{\marginpar{\bf More detail}
    \noindent{\bf Need more detail!}
    \svp}
\nc{\Int}{\mathbf{Int}}
\nc{\Mon}{\mathbf{Mon}}
\nc{\remarks}{\noindent{\bf Remarks: }}
\nc{\Rings}{\mathbf{Rings}}
\nc{\Sets}{\mathbf{Sets}}
\nc{\BA}{{\Bbb A}} \nc{\CC}{{\Bbb C}} \nc{\DD}{{\Bbb D}}
\nc{\EE}{{\Bbb E}} \nc{\FF}{{\Bbb F}} \nc{\GG}{{\Bbb G}}
\nc{\HH}{{\Bbb H}} \nc{\LL}{{\Bbb L}} \nc{\NN}{{\Bbb N}}
\nc{\KK}{{\Bbb K}} \nc{\QQ}{{\Bbb Q}} \nc{\RR}{{\Bbb R}}
\nc{\TT}{{\Bbb T}} \nc{\VV}{{\Bbb V}} \nc{\ZZ}{{\Bbb Z}}
\nc{\cala}{{\mathcal A}} \nc{\calc}{{\mathcal C}}
\nc{\cald}{{\mathcal D}} \nc{\cale}{{\mathcal E}}
\nc{\calf}{{\mathcal F}} \nc{\calg}{{\mathcal G}}
\nc{\calh}{{\mathcal H}} \nc{\cali}{{\mathcal I}}
\nc{\call}{{\mathcal L}} \nc{\calm}{{\mathcal M}}
\nc{\caln}{{\mathcal N}} \nc{\calo}{{\mathcal O}}
\nc{\calp}{{\mathcal P}} \nc{\calr}{{\mathcal R}}
\nc{\cals}{{\mathcal S}}
\nc{\calt}{{\mathcal T}} \nc{\calw}{{\mathcal W}}
\nc{\calk}{{\mathcal K}} \nc{\calx}{{\mathcal X}}
\nc{\CA}{\mathcal{A}}
\nc{\fraka}{{\frak a}}
\nc{\frakA}{{\frak A}}
\nc{\frakb}{{\frak b}}
\nc{\frakB}{{\frak B}}
\nc{\frakH}{{\frak H}}
\nc{\frakM}{{\frak M}}
\nc{\bfrakM}{\overline{\frakM}}
\nc{\frakm}{{\frak m}}
\nc{\frakP}{{\frak P}}
\nc{\frakN}{{\mathfrak N}}
\nc{\frakp}{{\frak p}}
\nc{\frakS}{{\frak S}}
\font\cyr=wncyr10
\font\cyrs=wncyr7
\nc {\p}{\partial }
\nc{\redt}[1]{\textcolor{red}{#1}} \nc{\li}[1]{\textcolor{red}{Li:
#1}} \nc{\zb}[1]{\textcolor{blue}{Bin: #1}}
\nc{\zhb}[1]{\textcolor{red}{Bin: #1}}
\begin{document}

\title[Differential Birkhoff decomposition and MZVs]{Differential Birkhoff decomposition and
the renormalization of multiple zeta values}
%
\author{Li Guo}
\address{Department of Mathematics and Computer Science,
         Rutgers University,
         Newark, NJ 07102}
\email{liguo@newark.rutgers.edu}
\author{Bin Zhang}
\address{Yangtze Center of Mathematics,
Sichuan University, Chengdu, 610064, P. R. China }
\email{binzhang@mpim-bonn.mpg.de}

\maketitle

\begin{abstract}
In the Hopf algebra approach of Connes and Kreimer on renormalization of quantum field theory, the renormalization process is views as a special case of
the Algebraic Birkhoff Decomposition. We give a differential algebra variation of this decomposition and apply this to the study of multiple zeta values.
\end{abstract}



\setcounter{section}{0}

\section{Introduction}

This paper applies the renormalization method in quantum field theory to the study of multiple zeta values when the defining sums of the multiple zeta values are divergent, with an emphasis on the differential structure underlying the renormalization process.

As we will see later, such divergence of the infinite sums cannot be cured by traditional mathematical methods, such as analytic continuation as in the case of one variable (Riemann) zeta function. On the other hand, theoretical physicists have dealt with similar divergencies in quantum field theory for several decades with such a great success that the related renormalization process is regarded as one of the greatest achievements in modern physics. Nevertheless mathematicians have been skeptical about the soundness of the mathematical foundation of the renormalization process. This situation is changed by the recent seminal work of Connes and Kreimer~\mcite{C-K1,C-K2} which puts the renormalization process in a more mathematical framework of Algebraic Birkhoff Decomposition. Such a framework not only reveals the mathematical structure underlying the physics process, it also makes it possible for this physics process to be adapted to treat other apparently unrelated problems in mathematics, in particular the divergence of multiple zeta values.

This is the goal of this and the companying paper~\mcite{G-Z} which complete each other. This paper consists of three sections.
Section~\mref{sec:mzvs}
motivates our renormalization approach through examples and special cases.
In Section~\mref{sec:setup}, we obtain the differential variation of the Algebraic Birkhoff Decomposition of Connes and Kreimer~\mcite{C-K1,C-K2}. We also extend the well-known quasi-shuffle Hopf algebra of Hoffman~\mcite{Ho2} in two directions.
In one direction we consider such an algebra that is generated by a module instead of by a locally finite set.
In the other direction, we consider quasi-shuffle Hopf algebras in the context of differential Hopf algebras instead of Hopf algebras. This context allows us to apply the Differential Algebraic Birkhoff Decomposition in Section~\mref{sec:gmzv} to study differential properties of renormalized multiple zeta values.

This paper highlights the differential aspect of the renormalization process. As is well-known, for the convergent multiple zeta values, the interaction between the quasi-shuffle product from their summation representations and the shuffle product from their integration representations plays a major role in their study. In the renormalized approach of divergent multiple zeta values, the integration representation is not available. On the other hand, differentiation is  essential in the study of renormalized multiple zeta values. We interpret this in the Algebraic Birkhoff Decomposition and give an application on generating functions of renormalized multiple zeta values.

Parts of this paper, such as the examples in Section~\mref{sec:mzvs} and the general construction of quasi-shuffle Hopf algebras in Section~\mref{sec:setup}, were included in an early version of~\mcite{G-Z}, but was taken out of the final version of that paper to limit the size of the paper. Some readers have found the examples helpful in understanding the renormalization process in general.
Further the general construction of quasi-shuffle algebra in Section~\mref{sec:setup} has been referred to or related to in a number of papers, such as that of D. Manchon and S. Payche~\mcite{M-P2} on renormalization of multiple zeta values, of J. Zhao~\mcite{Zh} on renormalization of multiple $q$-zeta values and of F. Menous~\mcite{Me} on Birkhoff decomposition. In particular, for the last two references, the case treated in~\mcite{G-Z} is not adequate and the general construction is needed. Also considering its application in later part of this paper in the differential context, we have decided to make the details of the construction available.

\medskip

\noindent {\bf Acknowledgements.} Both authors thank the Max-Planck
Institute for Mathematics in Bonn for the stimulating environment.
The first named author thanks the NSF grant DMS 0505445 for
support. Thanks also go to Robert Sczech, especially for suggesting the term
algebraic continuation, and to Herbert Gangl, especially for
suggesting the bar notation.

\section{Renormalization of MZVs: motivation and examples}
\mlabel{sec:mzvs}

Multiple zeta values (MZVs) are defined to be the convergent sums
\begin{equation} \zeta(s_1,\cdots, s_k)=\sum_{n_1>\cdots>n_k>0}
    \frac{1}{n_1^{s_1}\cdots n_k^{s_k}}
\mlabel{eq:mzv}
\end{equation}
where $s_1,\cdots,s_k$ are positive integers with $s_1>1$. Since the papers of Hoffman~\mcite{Ho0} and Zagier~\mcite{Za} in the early 1990s, their study have attracted interests from several areas of mathematics and physics~\mcite{B-B-B-L,B-K,Ca1,Go3,G-M,Ho2,Krei}, including number theory, combinatorics, algebraic geometry and mathematical physics.

In order to study of the multiple variable function
$\zeta(s_1,\cdots,s_k)$ at integers $s_1,\cdots,s_k$ where the
defining sum (\mref{eq:mzv}) is divergent, one first tries to use
the analytic continuation, as in the case of the one variable case
of the Riemann zeta function. Such an analytic continuation is
achieved recently in~\mcite{AET,A-K,Ma1,Ma2,Zh2}. Unfortunately,
unlike in the one variable case, the multiple zeta function in
Eq.~(\mref{eq:mzv}) is still undefined at most non-positive integers even with the analytic continuation.
Nevertheless, possible definitions of multiple zeta functions at
certain non-positive integers were proposed in \mcite{AET,A-K} by
making use of the analytic continuation. Let us briefly recall
these previous progresses before introducing our approach by
renormalization.

\subsection{Earlier approach by analytic continuation}
Analytic continuation of $\zeta(s_1,s_2)$ has been considered as early as 1949 by Atkinson~\mcite{At2} with applications to the study of the asymptotic behavior of the ``mean values" of zeta-function.

Through the more recent work of Zhao~\mcite{Zh2} and Akiyama-Egami-Tanigawa~\mcite{AET},
we know that $\zeta(s_1,\cdots,s_k)$ can be meromorphically continued to $\CC^k$
with singularities on the subvarieties
\begin{eqnarray*}
s_1&=&1;\\
s_1+s_2&=&2,1,0,-2,-4, \cdots; {\rm\ and\ } \mlabel{eq:pole}\\
\sum_{i=1}^j
s_{i} &\in& \ZZ_{\leq j}\ (3\leq j\leq k).
\end{eqnarray*}
We also have some control of the behavior near the singularities.
For example, near $(0,0)$,
$$ \zeta(s_1,s_2)=\frac{5s_1+4s_2}{12(s_1+s_2)}+R_2(s_1,s_2)$$
where $R_2(s_1,s_2)$
is an entire function near $(0,0)$ with $R_2(0,0)=0$.

In~\mcite{AET,A-K}, several definitions were proposed for multiple zeta functions at $(s_1,\cdots,s_k)$ where $s_i$ are all non-positive. Some of the definitions of $\zeta(s_1,\cdots,s_k)$ are \begin{equation}
\lim_{r_1\to s_1}\cdots\lim_{r_k\to s_k} \zeta(r_1,\cdots,r_k),\quad
\lim_{r_k\to s_k}\cdots\lim_{r_1\to s_1} \zeta(r_1,\cdots,r_k),\quad
\lim_{r\to 0} \zeta(s_1+r,\cdots,s_k+r).
\mlabel{eq:jmzv}
\end{equation}
Naturally they give different values. In the case of
$\zeta(s_1,s_2)$ at $(s_1,s_2)=(0,0)$, the proposed values are
$5/12,1/3, 3/8$ respectively according to the above three
definitions. In fact, by letting $(r_1,r_2)$ approach to $(0,0)$
along different paths, one can get any value, as
well as the infinity, to be the limit. Even though some good
properties of the variously defined non-positive MZVs were
obtained in these studies, they fell short of the analogous
properties of the positive MZVs, especially the double shuffle
relations.

\subsection{An illustration of the renormalization approach}
Our approach to define MZVs where the sum (\mref{eq:mzv}) is divergent is adapted from a renormalization procedure (dimensional
regularization plus minimal subtraction) in quantum field theory
(QFT). This definition coincides with the
usual definition of MZVs when they are defined. Our
extended MZVs also satisfy the quasi-shuffle
relation. So our approach is an {\bf algebraic continuation} in the sense
that we extend the definition of MZVs that preserves the
quasi-shuffle relation.

The QFT renormalization procedure was recently put into a more
mathematical framework through the work of Connes and
Kreimer~\mcite{C-K1,C-K2} and was thus made it possible to be applied to problems in mathematics. For our purpose, the dimensional regularization of
Feynman integrals is replaced by a regularization (or
deformation) of infinite series that has occurred in the study of
Todd classes for toric varieties~\mcite{B-Z}.

Here we illustrate this method by some special cases with the general cases given in \S\,\mref{sec:gmzv} and in~\mcite{G-Z}. First recall the following generating series of Bernoulli numbers that goes back to Euler.
\begin{equation} \frac{\vep}{e^\vep-1}=\sum_{k\geq 0} B_k \frac{\vep^k}{k!}
\mlabel{eq:bern}
\end{equation}
It can be easily rewritten as
\begin{equation}
 \frac{e^\vep}{1-e^\vep}= -\frac{1}{\vep}\frac {-\vep}{e^{-\vep}-1}
    = -\frac{1}{\vep}+ \sum_{k\geq 0}\zeta(-k) \frac{\vep^{k}}{k!}
    \mlabel{eq:zeta}
\end{equation}
since $B_0=1$ and $\zeta(-k)=(-1)^k\frac{B_{k+1}}{k+1}$ for $k\geq
0$.

Now consider
$$Z(s;\vep)=\sum_{n\geq 1} \frac{e^{n\vep}}{n^s}, $$ regarded as a deformation or ``regularization" of the series defining the Riemann zeta function
$\zeta(s)=\sum_{n\geq 1} \frac{1}{n^s}$. The regularized series converges for any integer $s$ when ${\rm Re}(\vep)<0$. In particular,
$$Z(0;\vep)=\frac{e^\vep}{1-e^\vep}$$
and Eq.~(\mref{eq:zeta}) gives
the Laurent series expansion of the regularized sum $Z(0;\vep)=\sum_{n\geq 1}
e^{n\vep}$ at $\vep=0$. For a Laurent series $f(\vep)=\sum_{n\geq
N} a_n\vep^n$ where $N\in \ZZ$, we denote the {\bf power series part} (or the {\bf
finite part}) $\sum_{n\geq 0} a_n\vep^n$ of the Laurent series by
$\fp(f)$, a notation borrowed from~\mcite{M-P1} which, according to
the authors, can be traced back to Hadamard. We tentatively call
$\fp(f)|_{\vep=0}$ the renormalized value of $f(\vep)$ (see
Section~\mref{ss:mzv3} for the general case). Then we have
$$ \fp \big(\sum_{n\geq 1}e^{n\vep}\big)\Big|_{\vep=0} =\zeta(0).$$
So the renormalized value of $Z(0;\vep)=\sum_{n\geq 1} e^{n\vep}$ is $\zeta(0)$. Similarly, to evaluate $\zeta(-k)$ for an integer $k\geq 1$, consider the regularized sum
$$ Z(-k;\vep)=\sum_{n\geq 1}{n^k} {e^{n\vep}}
= \frac{d^k}{d\vep} \big(\frac{e^\vep}{1-e^\vep}\big) $$ which
converges uniformly on any compact subset in ${\rm
Re}(\vep)<0$. So its Laurent series expansion at $\vep=0$ is obtained
by termwise differentiating Eq.~(\mref{eq:zeta}), yielding
\begin{equation}
Z(-k;\vep)=(-1)^{-k-1}(k)!\,\vep^{-k-1}+\sum _{j=0}^{\infty} \zeta
(-k-j)\frac {\vep^j}{j!}. \mlabel{eq:zreg}
\end{equation}
We then have
$$ \fp\big(\sum_{n\geq 1}{n^k} {e^{n\vep}}\big)\Big|_{\vep=0}
    = \zeta (-k).$$

Thus the renormalization method does give the correct
Riemann zeta values at non-positive integers. We next extend this to multiple zeta functions and ``evaluate" $\zeta(0,0)$, for example, by consider the regularized sum
$$ Z(0,0;\vep)=\sum_{n_1>n_2>0} e^{n_1\vep}e^{n_2\vep} =\frac{e^\vep}{1-e^\vep}\frac{e^{2\vep}}{1-e^{2\vep}}.$$
Naively taking the finite part as in the one variable case, we find
$$\fp\big(\sum_{n_1>n_2>0} e^{n_1\vep}e^{n_2\vep}\big)\big|_{\vep=0}
=\fp\big(\frac{1}{2\vep^2}-\frac{3}{2}\zeta(0)\frac{1}{\vep}+
\big(-\frac{5}{2}\zeta(-1)+\zeta(0)^2\big)+o(\vep)\big)\big|_{\vep=0}
=11/24.$$ This does not agree with any of the previously proposed
values of $\zeta(0,0)$ in Eq.~(\mref{eq:jmzv}). Further, this
value does not satisfy the well-known quasi-shuffle
(stuffle) relation:
$$ \zeta(0) \zeta(0)\neq 2\, \zeta(0,0)+\zeta(0)$$
since the left hand side is $1/4$ and the right hand side is $5/12$.
Recall the well-known quasi-shuffle relation
$$ \zeta(s)\zeta(s)=2\, \zeta(s,s)+\zeta(2s)$$
for any integer $s\geq 2$.

To improve this situation and obtain a more suitable definition of
$\zeta(0,0)$, we recall that a key principle in a renormalization  procedure of QFT is that if a divergent Feynman integral contains a component integral that is already
divergent, then the divergency of the component integral should be
removed before removing the divergency of the integral itself.
For our example of $\zeta(0,0)$, the regularized sum
$$\sum_{n_1>n_2>0} e^{n_1\vep}e^{n_2\vep}
=\sum_{n_2\geq 1} e^{n_2\vep} \sum_{n_1\geq n_2+1} e^{n_1\vep}$$
has a component sum $\sum_{n_1\geq n_2+1} e^{n_1\vep}$ that is already divergent when $\vep$ goes to 0.
By the renormalized process adopt to our case (see Eq.~(\mref{eq:dmzv}) and Remark~\mref{re:two0}) we found that the renormalized value should be defined by
\begin{eqnarray*}
&& \fp\Big(\sum_{n_1>n_2>0} e^{n_1\vep}e^{n_2\vep}-\sum_{n_2>0} e^{n_2\vep} \big(
\underbrace{\sum_{n_1>0} e^{n_1 \vep}-{\rm\, \fp}(\sum_{n_1>0} e^{n_1 \vep})}_{\mbox{subdivergence}}\big)\Big)\Big|_{\vep=0}\\
&=& \fp\Big( \frac{1}{2}\frac{1}{\vep^2}-\frac{3}{2}\zeta(0)\frac{1}{\vep} +(-\frac{5}{2}\zeta(-1)+\zeta(0)^2+o(\vep) ) -\big(\frac{1}{\vep^2}-\frac{\zeta(0)}{\vep}-\zeta(-1)+o(\vep) \big) \Big)\Big|_{\vep=0}\\
&=& -\frac{3}{2}\zeta(-1)+\zeta(0)^2=\frac{3}{8}.
\end{eqnarray*}
This value indeed satisfies the
quasi-shuffle relation $ \zeta(0) \zeta(0)= 2\zeta(0,0)+\zeta(0)$.

\medskip

This renormalization process will be systematically carried out for all $\zeta(s_1,\cdots,s_k)$ in \S\,\mref{sec:gmzv} after the general setup in \S\,\mref{sec:setup}.
Remark~\mref{re:two0} shows how the above example
follows from the general case.

\section{Differential Birkhoff decomposition and quasi-shuffle Hopf algebras}
\mlabel{sec:setup}
We formulate a general setup for our later applications to MZVs.

\subsection{Differential Birkhoff decomposition}
We review and extend the algebraic framework of Connes and Kreimer for renormalization of perturbative quantum field theory. For further details of physics applications, see~\mcite{C-K1,C-K2,C-M,E-G-K1,E-G-K2,F-G,Ma}.

Let $\bfk$ be a unitary commutative ring that we usually take to be $\RR$ or a subring of $\CC$.
In the following an algebra means a unitary $\bfk$-algebra unless otherwise specified. A {\bf connected filtered Hopf algebra} is a Hopf
algebra $(H, \Delta)$ with $\bfk$-submodules $H_n,\ n\geq 0,$ of $H$ such
that
\begin{enumerate}
\item $H_n\subseteq H_{n+1}$;
\item $\cup_{n\geq 0} H_n = H$;
\item $H_p H_q\subseteq H_{p+q}$;
\item
$\Delta(H_n) \subseteq \bigoplus_{p+q=n} H_p\otimes H_q.$
\item {\rm (connectedness)} $H_0=\bfk$.
\end{enumerate}
A {\bf Rota--Baxter algebra}~\mcite{Ba,Ro1,Ro4} of weight $\lambda$ is a pair $(R,P)$ where $R$ is an algebra and $P:R\to R$ is a linear operator such that
\begin{equation}
P(x)P(y)=P(xP(y))+P(P(x)y)+\lambda P(xy),
\mlabel{eq:rbe}
\end{equation}
for any $x,\,y\in R$. Often $\theta=-\lambda$ is used, especially in the physics literature.

Now let $H$ be a connected filtered Hopf algebra and let $(R,P)$ be a commutative Rota-Baxter algebra.
Consider the algebra $\calr:=\Hom(H,R)$ of linear maps from $H$ to $R$ where
the product is given by the convolution
$$ f\star g (x):= \sum_{(x)} f(x_{(1)}) g(x_{(2)}).$$
Here we have used the Sweedler's notation
$\Delta(x)=\sum_{(x)} x_{(1)} \barot x_{(2)}$.
Then the operator
$$\calp: \Hom(H,R)\to \Hom(H,R), f\mapsto P\circ f,$$
is a Rota-Baxter operator on $\calr$.

\begin{defn}{\rm
\begin{enumerate}
\item
A {\bf differential algebra} is a pair $(R,d)$ where $R$ is an algebra and $d$ is a {\bf differential operator}, that is, such that  $d(xy)=d(x)y+xd(y)$ for all $x,y\in R$. A differential algebra homomorphism $f: (R_1,d_1)\to (R_2,d_2)$ between two differential algebras $(R_1,d_1)$ and $(R_2,d_2)$ is an algebra homomorphism $f:R_1\to R_2$ such that $f\circ d_1 = d_2 \circ f$.
\item
A {\bf differential Hopf algebra} is a pair $(H,d)$ where $H$ is a
Hopf algebra and $d:H\to H$ is a differential operator such that
\begin{equation}
\Delta(d(x))=
\sum_{(x)} \big( d(x_{(1)})\barot\, x_{(2)} +
    x_{(1)} \barot\, d(x_{(2)})\big).
\mlabel{eq:diffH0}
\end{equation}
\item
A {\bf differential Rota-Baxter algebra} is a triple $(R,P,d)$ where
$(R,P)$ is a Rota-Baxter algebra and $d:R\to R$ is a differential
operator such that $P\circ d=d\circ P$.
\end{enumerate}
}
\mlabel{de:diff}
\end{defn}

\begin{theorem}
Let $H$ be a commutative connected filtered Hopf algebra.
Let $(R,P)$ be a Rota-Baxter algebra of weight $-1$.
Let $\phi: H \to R$ be an algebra homomorphism.
\begin{enumerate}
\item
{\bf (Algebraic Birkhoff Decomposition)} There are algebra homomorphisms $\phi_-: H \to \bfk+P(R)$ and
$\phi_+: H \to \bfk+(\id_R -P)(R)$ such that
$$\phi=\phi_-^{\star\, (-1)}\star \phi_+.$$
Here $\phi_-^{\star\, (-1)}$ is the inverse of $\phi_-$ with respect to the convolution product.
Further,
\begin{equation} \phi_-(x)=-P\big(\phi(x)+\sum_{(x)} \phi_-(x')\phi(x'')\big)
\mlabel{eq:phi-}
\end{equation}
and
\begin{equation}
 \phi_+(x)=(\id-P)\big(\phi(x)+\sum_{(x)} \phi_-(x')\phi(x'')\big).
\mlabel{eq:phi+}
\end{equation}
Here we have used the notation
$\Delta(x)=x\ot 1 + 1\ot x + \sum_{(x)} x'\ot x''.$
\mlabel{it:decom}
\item
If $P^2=P$, then the decomposition in~$($\mref{it:decom}$)$ is unique.
\mlabel{it:uni}
\item
{\bf (Differential Algebraic Birkhoff Decomposition)} If in addition $(H,d)$ is a differential Hopf algebra, $(R,P,\partial)$ is
a commutative differential Rota-Baxter algebra,
and $\phi$ is a differential algebra homomorphism,
then $\phi_-$ and $\phi_+$ are also differential
algebra homomorphisms.
\mlabel{it:diff}
\end{enumerate}
\mlabel{thm:diffBirk}
\end{theorem}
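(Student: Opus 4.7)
The plan is to handle parts \mref{it:decom} and \mref{it:uni} by the now-standard Connes--Kreimer recursion, and then obtain part \mref{it:diff} by an induction argument running in parallel with the construction of $\phi_\pm$. For \mref{it:decom}, I would define $\phi_-$ on the connected filtration $H_n$ inductively by the prescribed formula \mref{eq:phi-}, setting $\phi_-(1)=1$, and then define $\phi_+ = \phi_- \star \phi$; Eq.~\mref{eq:phi+} then drops out of $\phi_+ = \phi + \phi_- \star \phi$ together with $\phi_- = -P(\phi + \phi_- \star' \phi)$, where $\star'$ is convolution against the reduced coproduct. The only substantive claim is that $\phi_-$ so defined is a $\bfk$-algebra homomorphism; one shows $\phi_-(xy) = \phi_-(x)\phi_-(y)$ by induction on degree, expanding $\Delta(xy) = \Delta(x)\Delta(y)$ and using the Rota--Baxter identity \mref{eq:rbe} at weight $-1$ to match terms. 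For \mref{it:uni}, if $P^2=P$ then $P(R)\cap(\id-P)(R)=0$, so $\bfk + P(R)$ and $\bfk + (\id-P)(R)$ meet only in $\bfk$; any two decompositions of $\phi$ would differ by an element of $\Hom(H,R)$ taking values in the intersection and fixing $1$, forcing equality.

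For \mref{it:diff}, introduce on $\calr = \Hom(H,R)$ the operator $D(f) = \partial \circ f - f \circ d$ and observe three facts. First, the hypothesis $\partial \circ P = P \circ \partial$ gives $D \circ \calp = \calp \circ D$, i.e.\ $D$ commutes with the induced Rota--Baxter operator. Second, the differential Hopf algebra axiom \mref{eq:diffH0}, together with $d(1)=0$ (which follows from $d$ being a derivation), says exactly that for $f,g \in \calr$ we have $D(f \star g) = D(f) \star g + f \star D(g)$; that is, $D$ is a derivation of the convolution algebra $\calr$. Third, the hypothesis on $\phi$ is precisely $D(\phi) = 0$. The goal is then to show $D(\phi_-) = 0$ (from which $D(\phi_+) = 0$ follows by $\phi_+ = \phi_- \star \phi$ and the Leibniz rule for $D$).

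I would argue $D(\phi_-) = 0$ by induction on the filtration degree. Writing $\Delta(x) = x\ot 1 + 1\ot x + \sum_{(x)} x'\ot x''$ with $x',x''$ of strictly smaller degree, applying $\partial$ to \mref{eq:phi-} and using $\partial P = P\partial$ gives
\[
\partial\phi_-(x) \;=\; -P\Bigl(\partial\phi(x) + \sum_{(x)} \partial\phi_-(x')\,\phi(x'') + \sum_{(x)} \phi_-(x')\,\partial\phi(x'')\Bigr).
\]
On the other hand, \mref{eq:diffH0} implies the reduced coproduct of $d(x)$ equals $\sum_{(x)}\bigl(d(x')\ot x'' + x'\ot d(x'')\bigr)$, so
\[
\phi_-(d(x)) \;=\; -P\Bigl(\phi(d(x)) + \sum_{(x)} \phi_-(d(x'))\,\phi(x'') + \sum_{(x)} \phi_-(x')\,\phi(d(x''))\Bigr).
\]
The inductive hypothesis $\partial\phi_-(x') = \phi_-(d(x'))$ for the lower-degree $x'$, together with $\partial\phi = \phi\circ d$, matches the two expressions term by term, yielding $\partial\phi_-(x) = \phi_-(d(x))$. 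The main obstacle is really just verifying the second fact above, that $D$ is a convolution derivation, because this is where \mref{eq:diffH0} has to be invoked carefully (and where one must check $d(1)=0$ to handle the unit terms in $\Delta(x)$); once that is in hand the induction is essentially forced by the recursive formulas.
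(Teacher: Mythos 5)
Your proposal is correct and follows essentially the same route as the paper: for parts (\mref{it:decom}) and (\mref{it:uni}) the paper simply cites Manchon and Ebrahimi-Fard--Guo--Kreimer (your sketches reproduce the standard arguments contained in those references), and for part (\mref{it:diff}) the paper argues exactly as you do, proving $\partial\circ\phi_-=\phi_-\circ d$ by induction on the filtration degree, applying $\partial$ to Eq.~(\mref{eq:phi-}), using $P\circ\partial=\partial\circ P$ together with Eq.~(\mref{eq:diffH0}) to identify the reduced coproduct of $d(x)$ with $\sum_{(x)}\big(d(x')\ot x''+x'\ot d(x'')\big)$, and matching terms via the inductive hypothesis. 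Your only (cosmetic) divergence is the packaging through the convolution derivation $D(f)=\partial\circ f-f\circ d$, which lets you deduce the claim for $\phi_+$ from $\phi_+=\phi_-\star\phi$ and the Leibniz rule, whereas the paper just remarks that the induction for $\phi_+$ is verbatim the same.
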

\begin{proof}
For the proof of (\mref{it:decom}), see~\cite[Theorem II.5.1]{Ma}.
For the proof of (\mref{it:uni}), see~\cite[Theorem 3.7]{E-G-K3}. So we just
need to prove (\mref{it:diff}). We only verify that $\phi_-$ is a differential algebra homomorphism, the proof for $\phi_+$ being the same. For this we use induction on $n$
to prove
\begin{equation}
\partial\circ \phi_-(x)=\phi_-\circ d (x)
\mlabel{eq:diff}
\end{equation}
for $x\in H_n, n\geq 0$.

For $x\in H_0$, we have $x\in \bfk$. Since $\phi_-$ is an
algebra homomorphism by (\mref{it:decom}), we have
$\phi_-(x)=x$. So $\partial(\phi_-(x))=0=\phi_-(d(x)).$

Assume Eq.~(\mref{eq:diff}) has been verified for $n\leq k$ and
consider $x\in H_{k+1}$.
Applying $\partial$ to Eq.~(\mref{eq:phi-}), we have
\begin{eqnarray*}
\partial(\phi_-(x))&=& \partial\big(- P(\phi(x)+\sum_{(x)}\phi_-(x')\phi(x''))\big)\\
&=& -P\big( \partial(\phi(x))+\sum_{(x)}(\partial \phi_-(x')\phi(x'')
    +\phi_-(x')\partial(\phi((x'')))\big)\\
&=& -P\big( \phi(d(x))+\sum_{(x)}(\phi_-(d(x'))\phi(x'')
    +\phi_-(x')\phi(d(x''))\big).
\end{eqnarray*}
Here we have used the induction hypothesis in the last step.
On the other hand,
\begin{eqnarray*}
\phi_-(d(x))&=& -P (\phi(d(x))+\sum_{(d(x))}
    \phi_-(d(x)') \phi(d(x)'').
\end{eqnarray*}
Further
$$\Delta(d(x))= d(x)\barot 1 +1\barot d(x)+
    \sum_{(d(x))} d(x)' \barot d(x)''$$
    and
$$d(\Delta(x))
= d\big ( x\barot 1 +1\barot x + \sum_{(x)} x' \barot x'' \big)
= d(x)\barot 1 + 1\barot d(x)
+ \sum_{(x)} \big( d(x') \barot x''+ x' \barot d(x'')\big).
$$
Since $H$ is a differential Hopf algebra with the operator $d$, by Eq.~(\mref{eq:diffH0}), we have
$$ \sum_{(d(x))} d(x)' \barot d(x)''=
\sum_{(x)} \big( d(x') \barot x''+ x' \barot d(x'')\big)$$
and then applying $\phi_-\star \phi$ to it, we get
$$ \sum_{(d(x))} \phi_-(d(x)')  \phi(d(x)'')=
\sum_{(x)} \big( \phi_-(d(x'))  \phi(x'')
    + \phi_-(x')  \phi(d(x''))\big).$$
This completes the induction for Eq.~(\mref{eq:diff}).
\end{proof}

\subsection{Mixable shuffle algebras}
Let $A$ be a unitary or nonunitary $\bfk$-algebra. Consider the tensor product algebra
\begin{equation}
\calh_A:=T(A)=\oplus_{n\geq 0} A^{\mot n}
\mlabel{eq:tenalg}
\end{equation}
with the convention that $A^{\mot 0}=\bfk$.
Here we have used $\mot$ instead of $\ot$ to denote the tensor product of $A$ with itself since $\ot$ will be used for the coproduct introduced below.
Let $\otm$ be the product in $T(A)$, so for $\fraka\in A^{\mot m}$ and $\frakb\in A^{\mot n}$, we have
\begin{equation}
\fraka \otm \frakb =\left \{\begin{array}{ll}
 \fraka\mot\frakb\in A^{\mot m+n},& {\rm\ if\ } m>0,n>0, \\
 \fraka \frakb\in A^{\mot n}, & {\rm if\ } m=0, n>0,\\
\fraka\frakb\in A^{\mot m}, & {\rm if\ }  m>0, n=0,\\
\fraka\frakb\in \bfk, & {\rm if\ } m=n=0.
\end{array} \right .
\mlabel{eq:multm}
\end{equation}
Here the products in the second and third case are scalar product
and in the fourth case is the product in $\bfk$.
Thus, $\otm$ identifies $\bfk\mot A$ and $A\mot \bfk$ with $A$ by the structure maps $\bfk\mot A\to A$ and $A\mot \bfk \to A$ of the $\bfk$-module $A$.

We next equip $\calh_A=T(A)$ with another product $\msh$ so that, together with the deconcatenation coproduct, $\calh_A$ is a connected filtered Hopf algebra. In a special case, this recovers the quasi-shuffle Hopf algebra of Hoffman~\mcite{Ho2}.
The product $\ast$ is defined by the {\bf quasi-shuffle product} in its recursive form, and by the {\bf mixable shuffle product} in its explicit form by one of the authors and Keigher~\mcite{G-K1}.
Given two pure tensors $\fraka\in A^{\mot m}$ and $\frakb\in A^{\mot n}$, to define $\fraka * \frakb$ explicitly, recall that a shuffle of $\fraka$ and $\frakb$
is a tensor list of $a_i$ and $b_j$ without changing the order of the $a_i$s and $b_j$s.
A mixable shuffle is a shuffle in which some (or none) of the pairs $a_i\mot b_j$ are merged
into $a_i b_j$. See~\mcite{G-K1} for the precise definition. Then
$\fraka * \frakb$ is the sum of mixable shuffles of $\fraka$ and $\frakb$. It is shown in~\mcite{E-G1} that the quasi-shuffle product and the mixable shuffle agree with each other.
We note that the mixable shuffle arises from the construction of free commutative Rota--Baxter algebras and has been denoted by $\shplus$ in the literature of Rota--Baxter algebras (previously known as Baxter algebras), such
as~\mcite{A-G-K-O,E-G1,E-G4,E-G5,Gu1,Gu2,Gu3,G-K1,G-K2}. We adapt the notation $\ast$ to be consistent with the current convention on multiple zeta values.

We give more details of the recursive definition of the product $\msh$. We only need to define
$
\fraka \msh \frakb
$
for pure tensors $\fraka\in A^{\mot m}$ and $\frakb\in A^{\mot n}$, $m,n\geq 0$, and then to extend it to a product on $\calh_A$ by biadditivity. If $m=0$, then $\fraka\in \bfk$. Then we define $\fraka\msh \frakb$ to be the scalar product. Similarly if $n=0$. In particular, the identity $\bfone$ of $\bfk$ is also the identity for $\msh$. So we only need to define $\fraka\msh \frakb$ when $m>0$ and $n>0$. Then we have $\fraka=a_1\otm \fraka'$ and $\frakb=b_1\otm \frakb'$ for $a_1,b_1\in A$ and $\fraka'\in A^{\mot (m-1)},\frakb'\in A^{\mot (n-1)}$. With this notation, we define $\fraka\msh \frakb$ by induction on $m+n$. When $m+n=0$, this has been done. Assuming that $\fraka\msh \frakb$ has been define for $0\leq m+n\leq k$ and consider $\fraka\in A^{\mot m},\frakb\in A^{\mot n}$ with $m+n=k+1$. If either $m$ or $n$ is zero, then again we are done. If $m>0$ and $n>0$, then define
\begin{equation}
\fraka \msh \frakb=a_1\otm (\fraka'\msh \frakb)+b_1\otm (\fraka \msh \frakb')+ (a_1b_1) \otm (\fraka'\msh \frakb').
\mlabel{eq:msh}
\end{equation}
By the induction hypothesis, the three terms on the right hand side are defined.

Without resorting to the notation $\otm$, the product $\msh$ is defined by the following recursion.
First define the multiplication by $A^{\mot 0}=\bfk$ to be the scalar product. In particular, $\bfone$ is the identity.
For any $m,n\geq 1$ and
$\fraka:=a_1\mot\cdots \mot a_m\in A^{\mot m}$, $\frakb:=b_1\mot \cdots\mot b_n\in A^{\mot n}$, define
$a \ast b$ by induction on the sum $m+n$. Then $m+n\geq 2$. When $m+n=2$, we have
$a=a_1$ and $b=b_1$. Define
\begin{equation} a\msh b = a_1\mot b_1 + b_1\mot a_1 + a_1b_1.
\label{eq:quasi0}
\end{equation}
Assume that $\fraka\msh \frakb$ has been defined for $m+n\geq k\geq 2$ and consider $\fraka$ and $\frakb$
with $m+n=k+1$. Then $m+n\geq 3$ and so at least one of $m$ and $n$ is greater than 1.
Then we define
\begin{eqnarray}
  \fraka \msh \frakb &=& a_1\mot  b_1\mot  \cdots \mot b_n  + b_1\mot \big(a_1\msh (b_2\mot \cdots\mot b_n)\big) \notag\\
&&  + (a_1b_1)\mot  b_2\mot \cdots\mot  b_n, {\rm\ when\ } m=1, n\geq 2, \label{eq:quasi21} \\
  \fraka \msh \frakb &=& a_1 \mot\big ((a_2\mot \cdots\mot  a_m)\msh b_1 \big) + b_1\mot a_1\mot \cdots\mot a_m \notag \\
&&+ (a_1b_1) \mot  a_2\mot \cdots\mot  a_m,     {\rm\ when\ } m\geq 2, n=1, \label{eq:quasi22} \\
  \fraka \msh \frakb &=& a_1\mot  \big ((a_2\mot \cdots\mot a_m)\msh
(b_1\mot \cdots\mot  b_n)\big ) + b_1\mot  \big ((a_1\mot  \cdots \mot a_m)\msh (b_2 \mot \cdots \mot  b_n)\big) \notag \\
&& \qquad  + (a_1, b_1)  \big ( (a_2\mot \cdots\mot a_m) \msh
     (b_2\mot \cdots\mot  b_n)\big ),
     {\rm\ when\ } m, n\geq 2.\label{eq:quasi23}
\end{eqnarray}
Here the products by $\msh$ on the right hand side of each equation are well-defined
by the induction hypothesis.

To relate to the quasi-shuffle algebra of Hoffman and to the algebra we will use later, we introduce the following concept.

\begin{defn}{\rm
A semigroup $X$ is called a {\bf filtered semigroup} if there is an
increasing sequence $X_n\subseteq X$, $n\geq 1$, of subsets of $X$ such that $X_m  X_n\subseteq X_{m+n}$, $m,n\geq 1$.
}
\end{defn}
Let
$A_X=\bfk\,X$ be the semi-group ring of $X$ with coefficients in $\bfk$. A special case
is when $X$ is {\bf locally finite}, i.e., $X$ has a grading given by the disjoint union
$X=\coprod_{k\geq 1} X^{(k)}$ with each $X^{(k)}$ finite and such that
$X^{(m)}X^{(n)}\subseteq X^{(m+n)}$, $m,n\geq 1$.

The following theorem is a simple generalization of~\cite[Theorem 3.1, 3.2]{Ho2} and \cite[Theorem 2.5]{E-G1} where $X$ is a locally finite set. The same proof works in our generality with the notation $\otm$.

\begin{theorem}
Let $A$ be a commutative nonunitary $\bfk$-algebra.
\begin{enumerate}
\item
The product $\msh$ equips $\calh_A$ with the structure of a commutative unitary filtered $\bfk$-algebra.
\mlabel{it:qalg}
\item
The product $\msh$ coincides with the mixable shuffle product in~\mcite{G-K1}.
\mlabel{it:mixable}
\item
Equip $\calh_A$ with the deconcatenation coproduct
$$\Delta: \calh_A \to \calh_A \barot \calh_A,$$
\begin{eqnarray}
 \Delta( a_1\mot\cdots \mot a_m)&=&1\barot (a_1\mot \cdots \mot a_m)
+ \sum_{i=1}^{m-1} (a_1\mot \cdots\mot a_i)\barot (a_{i+1}\mot \cdots \mot a_m) \notag\\
&&+ (a_1\mot \cdots \mot a_m) \barot 1
\mlabel{eq:coprod}
\end{eqnarray}
and the projection counit
$$\vep: \calh_A \to \bfk$$
onto the direct summand $\bfk=A^{\mot 0}\subseteq \calh_A$.
Then $\calh_A$ is a commutative cocommutative connected filtered Hopf algebra.
\mlabel{it:hopf}
\end{enumerate}
\mlabel{thm:hopf}
\end{theorem}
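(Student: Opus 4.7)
The plan is to adapt the proofs of \cite[Theorems 3.1, 3.2]{Ho2} and \cite[Theorem 2.5]{E-G1}, both of which treat the special case where $A$ is the semigroup ring of a locally finite set. The only new ingredient is that the ``merging'' product $a_ib_j$ appearing in the last summand of the recursion (\ref{eq:msh}) now takes place in an arbitrary commutative nonunitary $\bfk$-algebra $A$, and that the notational convention (\ref{eq:multm}) for $\otm$ absorbs the boundary cases in which one of the tensor factors lies in $A^{\mot 0}=\bfk$. I would equip $\calh_A$ with the filtration $\calh_A^{(n)}:=\bigoplus_{k=0}^{n}A^{\mot k}$, which serves as the filtration in all three claims.

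For part (\ref{it:qalg}), commutativity is manifest term-by-term in the recursion (\ref{eq:msh}) because $A$ is commutative, and $\bfone\in\bfk=A^{\mot 0}$ is a two-sided identity by the scalar-product convention (\ref{eq:multm}). Associativity of $\msh$ is proved by a single induction on $m+n+p$: expanding $(\fraka\msh\frakb)\msh\mathfrak{c}$ and $\fraka\msh(\frakb\msh\mathfrak{c})$ via (\ref{eq:msh}) yields nine summands on each side, which match by the inductive hypothesis together with the associativity and commutativity of $A$. The filtered property $\calh_A^{(m)}\msh\calh_A^{(n)}\subseteq\calh_A^{(m+n)}$ follows immediately, since the first two summands of (\ref{eq:msh}) lie in $A^{\mot(m+n)}$ and the merged one in $A^{\mot(m+n-1)}$. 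Part (\ref{it:mixable}) is then obtained by sorting the sum defining the mixable shuffle of \mcite{G-K1} according to the fate of the leading pair $(a_1,b_1)$: either $a_1$ is placed first, or $b_1$ is placed first, or they are merged into $a_1b_1$; these three cases reproduce the three terms of (\ref{eq:msh}), and an induction on $m+n$ closes the identification.

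For part (\ref{it:hopf}), coassociativity of $\Delta$ and the counit axiom for $\vep$ are formal properties of deconcatenation, independent of the choice of $A$. Connectedness is immediate from $\calh_A^{(0)}=\bfk$, and once the bialgebra axioms are in place the antipode is supplied by the standard recursive formula on a connected filtered bialgebra. The main obstacle is therefore the bialgebra compatibility
\[
\Delta(\fraka\msh\frakb)=\Delta(\fraka)\,\msh_{\calh_A\barot\calh_A}\,\Delta(\frakb),
\]
where $\msh_{\calh_A\barot\calh_A}$ denotes the componentwise product on the tensor square. I would prove this by induction on $m+n$, simultaneously unwinding the recursion (\ref{eq:msh}) for $\msh$ and the explicit formula (\ref{eq:coprod}) for $\Delta$. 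The delicate step is the merged contribution $(a_1b_1)\otm(\fraka'\msh\frakb')$: after applying $\Delta$, the letter $a_1b_1$ can be split onto either side of the tensor factor, and one must verify that these contributions recombine with the deconcatenation-splittings of the other two summands of (\ref{eq:msh}) so that the total expression factors as $\Delta(\fraka)\msh\Delta(\frakb)$. Since this bookkeeping nowhere uses local finiteness of a generating set, the Hoffman and Guo--Keigher argument transfers verbatim once the convention (\ref{eq:multm}) for $\otm$ is in place.
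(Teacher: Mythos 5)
Your proposal is correct and follows essentially the same route as the paper: both adapt Hoffman's and Ebrahimi-Fard--Guo's arguments (replacing word length by tensor degree), prove the compatibility $\Delta(\fraka\msh\frakb)=\Delta(\fraka)\msh\Delta(\frakb)$ by induction on $m+n$ using the recursion~(\ref{eq:msh}) and the splitting $\Delta(\fraka)=1\ot\fraka+a_1\otm\Delta(\fraka')$, and then invoke the standard fact that a connected filtered bialgebra is automatically a Hopf algebra. The only cosmetic difference is that the paper writes out the term-matching for the coproduct compatibility in full while leaving associativity entirely to the cited references, whereas you sketch both at the same level of detail.
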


\begin{proof}
(\mref{it:qalg}) The same proof for \cite[Theorem 2.1]{Ho2} applies. Just replace the length of a word by the tensor degree of a tensor.
Alternatively, one can use item (\mref{it:mixable}) which is proved independent of item~(\mref{it:qalg}) and use the fact that the mixable shuffle product is commutative and associative~\cite[Theorem 3.5]{G-K1}.
\smallskip

\noindent
(\mref{it:mixable}) The proof is the same as for \cite[Theorem 2.5]{E-G1} since the proof there only requires that $A$ is a commutative $\bfk$-algebra.
\smallskip

\noindent
(\mref{it:hopf}) The proof that $\calh_A$ is a bialgebra is the same as for \cite[Theorem 3.1]{Ho2} by considering tensor products in place of word concatenations. Since the result is essential for our later applications, we provide some details.

The coassociativity is clear. So to prove that $\calh_A$ is a bialgebra, we only need to show that $\vep$ and $\Delta$ are algebra homomorphisms. For $\vep$, this is clear. For $\Delta$, we prove
\begin{equation}
\Delta(\fraka)\msh \Delta(\frakb)=\Delta(\fraka\msh \frakb)
\mlabel{eq:dcomp}
\end{equation}
for pure tensors $\fraka\in A^{\mot m}$ and $\frakb\in A^{\mot n}$ by induction on $m+n\geq 0$. If $m+n\leq 1$, then at least one of $\fraka$ and $\frakb$ is a scalar in $\bfk$ and Eq.~(\mref{eq:dcomp}) is clear. Suppose the equation has been proved for $0\leq m+n\leq k$ and consider pure tensors $\fraka$ and $\frakb$ with $m+n=k+1$. Then Eq.~(\mref{eq:dcomp}) is again clear if either one of $m$ or $n$ is zero. So we can assume $m>0$ and $n>0$. Then we have  $\fraka=a_1\otm \fraka'$ and $\frakb=b_1\otm \frakb'$ with $a_1,b_1\in A$ and $\fraka'\in A^{\mot (m-1)},\frakb'\in  A^{\mot (n-1)}$.
Let
 $$\Delta(\fraka')=\sum_{(\fraka')}\fraka'_{(1)}\ot \fraka'_{(2)}, \qquad \Delta(\frakb')=\sum_{(\frakb')}\frakb'_{(1)}\ot \frakb'_{(2)}$$
 by Sweedler's notation.
Then by Eq.~(\mref{eq:coprod}), we have
\begin{equation}
 \Delta(\fraka)=1\ot \fraka + \sum_{(\fraka')}(a_1\otm \fraka'_{(1)} )\ot \fraka'_{(2)}, \qquad
\Delta(\frakb)=1\ot \frakb + \sum_{(\frakb')}(b_1\otm \frakb'_{(1)} )\ot \frakb'_{(2)}.
\mlabel{eq:indD0}
\end{equation}
That is
\begin{equation}
 \Delta(\fraka)=1\ot \fraka + a_1\otm \Delta(\fraka'), \qquad
\Delta(\frakb)=1\ot \frakb + b_1\otm \Delta(\frakb').
\mlabel{eq:indD}
\end{equation}
Here $a_1 \otm \Delta(\fraka')$ means $a_1$ is multiplied with the first tensor factors of $\Delta(\fraka')$.
Thus
\begin{eqnarray*}
\Delta(\fraka)\msh \Delta(\frakb)&=&
\sum_{(\fraka')} \big((a_1\otm \fraka'_{(1)})\msh (b_1\otm \frakb'_{(1)})\big) \ot \sum_{(\frakb')}\big( \fraka'_{(2)}\msh \frakb'_{(2)}\big) \\
&& +\sum_{(\fraka')} (a_1\otm \fraka'_{(1)})\ot (\fraka'_{(2)}\msh \frakb)
+ \sum_{(\frakb')} (b_1\otm \frakb'_{(1)})\ot (\fraka \msh \frakb'_{(2)})
+ 1\ot (\fraka \msh \frakb).
\end{eqnarray*}
Applying Eq.~(\mref{eq:msh}) to the first and fourth terms gives
\begin{align*}
& \sum_{(\fraka')}\big (
a_1\otm (\fraka'_{(1)}\msh (b_1\otm \frakb'_{(1)})) +
b_1\otm ((a_1\otm \fraka'_{(1)})\msh \frakb'_{(1)}) +
(a_1b_1)\otm (\fraka'_{(1)}\msh \frakb'_{(1)})
\big ) \ot \sum_{(\frakb')}\big( \fraka'_{(2)}\msh \frakb'_{(2)}\big) \\
& +\sum_{(\fraka')} (a_1\otm \fraka'_{(1)})\ot (\fraka'_{(2)}\msh \frakb)
+ \sum_{(\frakb')} (b_1\otm \frakb'_{(1)})\ot (\fraka \msh \frakb'_{(2)})\\
&+ \bfone\ot (a_1\otm (\fraka'\msh \frakb)+b_1\otm (\fraka \msh \frakb')+ (a_1b_1) \otm (\fraka'\msh \frakb')).
\end{align*}

On the other hand, by Eq.~(\mref{eq:msh}), Eq.~(\mref{eq:indD}) and the induction hypothesis, we have
\begin{eqnarray*}
\Delta(\fraka\msh \frakb)&=& \Delta\big(a_1\otm (\fraka'\msh \frakb)+b_1\otm (\fraka \msh \frakb')+ (a_1b_1) \otm (\fraka'\msh \frakb)\big)\\
&=& \bfone \ot \big( a_1\otm (\fraka'\msh \frakb)\big)
+ a_1 \otm \Delta(\fraka'\msh \frakb)
+ \bfone \ot \big(b_1\otm (\fraka \msh \frakb')\big) \\
&&+ b_1\otm \Delta(\fraka \msh \frakb')
+ \bfone \ot \big((a_1b_1) \otm (\fraka'\msh \frakb)\big)
+ (a_1b_1) \otm \Delta(\fraka'\msh \frakb)\\
&=& \bfone \ot \big( a_1\otm (\fraka'\msh \frakb)\big)
+ a_1 \otm \big(\Delta(\fraka')\msh \Delta(\frakb)\big)
+\bfone \ot \big(b_1\otm (\fraka \msh \frakb')\big) \\
&&+ b_1\otm \big(\Delta(\fraka) \msh \Delta(\frakb')\big)
+ \bfone \ot \big((a_1b_1) \otm (\fraka'\msh \frakb)\big)
+ (a_1b_1) \otm \big(\Delta(\fraka')\msh \Delta(\frakb')\big)
\end{eqnarray*}
Then by Eq.~(\mref{eq:indD0}), the right hand side is
\begin{align*}
&a_1 \otm \big((\sum_{(\fraka')}\fraka'_{(1)} \ot \fraka'_{(2)})\msh (\bfone \ot \frakb
+ \sum_{(\frakb')} (b_1\otm \frakb'_{(1)}) \ot \frakb'_{(2)}) \big)
 \\
&+ b_1\otm \big( (\bfone\ot \fraka
+\sum_{(\fraka')} (a_1\otm (\fraka'_{(1)}\ot \fraka'_{(2)})) \msh \sum_{(\frakb')} \frakb'_{(1)}\ot \frakb'_{(2)}\big)\\
&+ (a_1b_1) \otm \big(\sum_{(\fraka')}(\fraka'_{(1)} \ot \fraka'_{(2)})\msh
\sum_{(\frakb')}(\frakb'_{(1)}\ot \frakb'_{(2)}) \big)\\
&+ \bfone\ot (a_1\otm (\fraka'\msh \frakb)+b_1\otm (\fraka \msh \frakb')+ (a_1b_1) \otm (\fraka'\msh \frakb')).
\end{align*}
This agrees with $\Delta(\fraka)\msh \Delta(\frakb)$.

Therefore $\calh_A$ is a bialgebra. By the definition of $\msh$ and $\Delta$, $\calh_A$ is connected filtered. Then $\calh_A$ is automatically a Hopf algebra by ~\cite[Proposition 5.3]{F-G}, for example.
\end{proof}

We next give a differential version of quasi-shuffle Hopf algebras.

\begin{theorem}
Let $(A,d)$ be a commutative differential algebra.
Extending $d$ to $\calh_A=\oplus_{k\geq 0} A^{\mot k}$ by defining, for $\fraka:=a_1\mot \cdots\mot a_k\in A^{\mot k}$,
\begin{equation}
 d(\fraka)=\sum_{i=1}^k a_{i,1}\mot\cdots\mot a_{i,k},
 \mlabel{eq:diffH}
 \end{equation}
where
$$ a_{i,j}=\left \{ \begin{array}{ll} a_j, & j\neq i, \\
    d(a_j),& j=i. \end{array} \right . $$
Then $(\calh_A,d)$ is a differential Hopf algebra. \mlabel{thm:diffHopf}
\end{theorem}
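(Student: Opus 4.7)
The plan is to verify the two conditions required for $(\calh_A, d)$ to be a differential Hopf algebra on top of the Hopf algebra structure already established in Theorem~\ref{thm:hopf}: first, that $d$ is a derivation with respect to the quasi-shuffle product $\msh$, and second, that $d$ satisfies the coproduct compatibility Eq.~(\ref{eq:diffH0}).

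I would dispose of the coproduct compatibility first, as it admits a direct computation. For a pure tensor $\fraka = a_1 \mot \cdots \mot a_m$, the definition (\ref{eq:diffH}) expresses $d(\fraka)$ as a sum indexed by $i \in \{1, \ldots, m\}$ of tensors in which the $i$-th factor has been replaced by $d(a_i)$. Applying the deconcatenation coproduct (\ref{eq:coprod}) to each summand produces a double sum indexed by pairs $(i, j)$, where $j$ is the splitting position in the coproduct. Partitioning this double sum according to whether $i \leq j$ (so the differentiated factor lands in the left tensor factor of the coproduct) or $i > j$ (so it lands in the right factor), and recognizing each resulting partial sum via the definition of $d$ applied to the left or right piece of the deconcatenation, recovers exactly $\sum_{(\fraka)} \bigl(d(\fraka_{(1)}) \barot \fraka_{(2)} + \fraka_{(1)} \barot d(\fraka_{(2)})\bigr)$.

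For the derivation property with respect to $\msh$, I would induct on $m+n$ where $\fraka \in A^{\mot m}$ and $\frakb \in A^{\mot n}$, using the recursive formula (\ref{eq:msh}). The base cases $m = 0$ or $n = 0$ reduce to $\bfk$-linearity of $d$. In the inductive step, write $\fraka = a_1 \otm \fraka'$ and $\frakb = b_1 \otm \frakb'$. The formula (\ref{eq:diffH}) is precisely what makes $d$ a derivation on the concatenation product $\otm$, so applying $d$ to each of the three terms on the right hand side of (\ref{eq:msh}) and then invoking the induction hypothesis on $\fraka' \msh \frakb$, $\fraka \msh \frakb'$ and $\fraka' \msh \frakb'$ yields an explicit expansion of $d(\fraka \msh \frakb)$. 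Separately, expanding $d(\fraka) \msh \frakb + \fraka \msh d(\frakb)$ by substituting $d(\fraka) = d(a_1) \otm \fraka' + a_1 \otm d(\fraka')$, the analogous expression for $d(\frakb)$, and applying (\ref{eq:msh}) to each of the four resulting quasi-shuffle products gives a second explicit expansion. Comparison shows the two expansions to be termwise equal; the only nontrivial matching is between the merging contributions, where the term $d(a_1 b_1) \otm (\fraka' \msh \frakb')$ arising on the left splits via the Leibniz rule in $A$ as $(d(a_1) b_1 + a_1 d(b_1)) \otm (\fraka' \msh \frakb')$, which is exactly what the right hand side produces.

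The main obstacle is the bookkeeping in the derivation check; conceptually, everything rests on the single use of the Leibniz rule for $d$ on $A$ itself to reconcile the merging of $a_1$ and $b_1$ in the recursive definition of $\msh$, but tracking all the $\otm$-concatenations and the nested $\msh$-products cleanly, and identifying the correct pairing of terms between the two sides, requires some care.
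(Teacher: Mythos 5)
Your proposal is correct, and on the coproduct side it coincides with the paper's proof: the paper's entire published argument for Theorem~\ref{thm:diffHopf} is precisely your computation, expanding $\Delta\circ d(\fraka)$ as a double sum over pairs $(i,j)$ (differentiated slot $i$, deconcatenation cut $j$) and splitting $d\circ\Delta(\fraka)$ according to whether $i\le j$ or $i>j$, so that the two partial sums reassemble into $d$ applied to the left and right pieces. Where you go beyond the paper is the multiplicative half: the paper never verifies that the extension of $d$ in Eq.~(\ref{eq:diffH}) is a derivation for the quasi-shuffle product $\msh$ --- the definition makes $d$ a derivation for the concatenation $\otm$ essentially by fiat, but Definition~\ref{de:diff} requires the Leibniz rule for the product of the Hopf algebra $\calh_A$, which is $\msh$, and this is left implicit. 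Your induction on $m+n$ via the recursion (\ref{eq:msh}) supplies exactly this missing check, and the point you isolate is the right one: after expanding both $d(\fraka\msh\frakb)$ and $d(\fraka)\msh\frakb+\fraka\msh d(\frakb)$, all terms match tautologically except the merging contributions, where $d(a_1b_1)\otm(\fraka'\msh\frakb')=\bigl(d(a_1)b_1+a_1d(b_1)\bigr)\otm(\fraka'\msh\frakb')$ by the Leibniz rule in $A$ itself --- the only place the differential structure of $A$ enters. So your write-up is strictly more complete than the paper's proof, which treats the derivation property as routine and proves only the compatibility (\ref{eq:diffH0}) with the coproduct.
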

\begin{proof}
We have
\begin{eqnarray*}
\Delta\circ d(\fraka)&=&
\sum_{i=1}^k \Delta(a_{i,1}\mot \cdots \mot a_{i,k})\\
&=& \sum_{i=1}^k \sum_{j=0}^k
(a_{i,1}\mot \cdots \mot a_{i,j}) \barot
(a_{i,j+1}\mot \cdots \mot a_{i,k}).
\end{eqnarray*}
Here we have use the convention that if $j=0$, then
$a_{i,1}\mot \cdots \mot a_{i,j}=\bfone\in \bfk$ and
if $j=k$, then $a_{i,j+1}\mot \cdots \mot a_{i,k}=\bfone\in \bfk$.

With the same convention, we also have
{\allowdisplaybreaks
\begin{eqnarray*}
d \circ \Delta(\fraka)&=&
d( \sum_{j=0}^k (a_1\mot \cdots \mot a_j)\barot
    (a_{j+1}\mot \cdots \mot a_k) \\
    &=&
\sum_{j=0}^k \big( d(a_1\mot \cdots \mot a_j) \barot
    (a_{j+1}\mot \cdots \mot a_k) +
    (a_1\mot \cdots \mot a_j) \barot
    d(a_{j+1}\mot \cdots \mot a_k)\big) \\
&=& \sum_{j=0}^k \big( \sum_{i=1}^j (a_{i,1}\mot \cdots \mot a_{i,j}) \barot
    (a_{i,j+1}\mot \cdots \mot a_{i,k}) \\
    && +
    \sum_{i=j+1}^k (a_{i,1}\mot \cdots \mot a_{i,j}) \barot
    (a_{i,j+1}\mot \cdots \mot a_{i,k})\big)\\
&=& \Delta\circ d(\fraka).
\end{eqnarray*}
}
\end{proof}

\section{Renormalization of multiple zeta values and the differential structure}
\mlabel{sec:gmzv}
We now apply the general setup in \S\,\mref{sec:setup} to the study of multiple zeta values. In order to apply the Differential Algebraic Birkhoff Decomposition in Theorem~\mref{thm:diffBirk}, we need to construct a commutative connected differential Hopf algebra $(\calh,d)$, a commutative differential Rota-Baxter algebra $(R,P,\partial)$ of weight $-1$ and an algebra homomorphism $\phi: \calh\to R$. We will provide these in \S\,\mref{ss:mzv3} which is modified with a differential twist from~\mcite{G-Z} for which we refer the reader for some of the details. We then use this decomposition to study the regularized and renormalized MZVs.

\subsection{Differential Algebraic Birkhoff Decomposition for multiple zeta values}
\mlabel{ss:mzv3}

\subsubsection{Directional regularized multiple zeta values}
As mentioned in \S\,\mref{sec:mzvs}, the expression
\begin{equation}
\zeta (\vec{s})= \sum_{n_1>\cdots>n_k>0} \frac{1}{n_1^{s_1} \cdots
    n_k^{s_k}}
\mlabel{eq:formgmzv}
\end{equation}
makes sense for integers $s_1,\cdots,s_k$ only when $s_i>0$ and $s_1>1$. For other integers, we call the expression {\bf formal multiple zeta values} since they are only formal expressions without a real meaning.
In order to make sense of such formal expressions, we
define the {\bf directional regularized multiple zeta values}
\begin{equation}
Z(\wvec{\vec{s}}{\vec{r}};\vep):=\sum_{n_1>\cdots>n_k>0}
\frac{e^{n_1\,r_1\vep} \cdots
    e^{n_k\,r_k\vep}}{n_1^{s_1}\cdots n_k^{s_k}}
\mlabel{eq:reggmzv}
\end{equation}
where $r_i$ are positive real numbers that are introduced so that the space spanned by the directional regularized multiple zeta values is closed under multiplication.

$Z(\wvec{\vec{s}}{\vec{r}};\vep)$ can be defined
recursively as follows. Consider the following set of functions.
For $(s,r)\in \ZZ\times \RR_{>0}$ and $(\vep,x)\in \CC \times \RR$
with ${\rm Re}(\vep)<0$, define
$$ f_{\wvec{s}{r}}(\vep,x):=f(\wvec{s}{r};\vep,x)= \frac{e^{xr\vep}}{x^s}.$$
For vectors $\vec{s}=(s_1,\cdots,s_k)\in \ZZ^k$ and $\vec{r}=(r_1,\cdots,r_k) \in (\RR_{>0})^k$, define
\begin{equation}
Z(\wvec{\vec{s}}{\vec{r}};\vep,x)
=\sum_{n_1>\cdots>n_k>0} \frac{e^{(n_1+x)\,r_1\vep} \cdots
    e^{(n_k+x)\,r_k\vep}}{(n_1+x)^{s_1}\cdots (n_k+x)^{s_k}}.
\end{equation}
Then $Z(\wvec{\vec{s}}{\vec{r}};\vep,x)$ is also given by the recursive definition
$$ Z (\wvec{s}{r};\vep; x)= Q(f_{\wvec{s}{r}}(\vep;x))$$
where $Q$ is the summation operator~\mcite{E-G5,Zud}
\begin{equation}
Q(f)(x)=\sum_{n\geq 1} f(x+n),
\mlabel{eq:sum}
\end{equation}
 and, for
$\vec{s}=(s_1,\cdots,s_k)\in \ZZ^k$ and $\vec{r}=(r_1,\cdots,r_k)
\in (\RR_{>0})^k$,
$$ Z(\wvec{\vec{s}}{\vec{r}};\vep,x)
=Q\big( f_{\wvec{s_k}{r_k}}(\vep,x) Z(\wvec{s_1,\cdots,s_{k-1}}{r_1,\cdots,r_{k-1}};\vep,x)\big), k\geq 2.$$
These are related to the multiple Lerch functions~\mcite{Ca1,E-G5} which are generalizations
of the multiple polylogarithms
$${\rm Li}_{s_1,\cdots,s_k}(z_1,\cdots,z_k)=\sum_{n_1>\cdots n_k>0}
    \frac{z_1^{n_1} \cdots z_k^{n_k}}{n_1^{s_1}\cdots n_k^{s_k}}$$
We then have
$$Z(\wvec{\vec{s}}{\vec{r}};\vep,0) =Z(\wvec{\vec{s}}{\vec{r}};\vep).$$

\subsubsection{The (differential) Hopf algebra for regularized multiple zeta values}

We next construct a Hopf algebra from the regularized MZVs to capture the algebra properties of these values.

We consider the commutative semigroup
\begin{equation}
\frakM= \{f_{{\wvec{s}{r}}}\ |\ (s,r)\in \ZZ \times \RR_{>0}\}
\end{equation}
with the multiplication
$$ f_{\wvec{s}{r}} f_{\wvec{s'}{r'}}=f_{\wvec{s+s'}{r+r'}}.$$
We similarly define the commutative semigroups
\begin{equation}
 \frakM_+=\{f_{\wvec{s}{r}}\ |\ (s,r)\in \ZZ_{>0} \times \RR_{>0}\},\quad
\frakM_-=\{f_{\wvec{s}{r}}\ |\ (s,r)\in \ZZ_{<0} \times \RR_{>0}\}.
\mlabel{eq:mbase}
\end{equation}

For each of these semigroups $\frakN$, let $A_\frakN=\bfk\, \frakN$ be the semigroup algebra. By Theorem~\mref{thm:hopf},
$$\calh_{\frakN}:=\sum_{n\geq 0} (A_\frakN) ^{\mot n}
    =\sum_{n\geq 0} \bfk\, \frakN^n,$$
 with the quasi-shuffle product and deconcatenation coproduct, is a
connected filtered Hopf algebra.
Further, for $\frakN=\frakM$ or $\frakM_-$, the operator
$$ d: A_\frakN \to A_\frakN,\ \wvec{s}{r}=r\wvec{s-1}{r}$$
is a differential operator. Thus by Theorem~\mref{thm:diffHopf}, $\calh_\frakN$ is a differential Hopf algebra.

\subsubsection{The Laurent series for directional regularized multiple zeta values}

Let $\CC\{\{\vep\}\}[\vep^{-1}]$ be the algebra of convergent
Laurent series, regarded as a subalgebra of the algebra of (germs
of) complex valued functions meromorphic in a neighborhood of
$\vep=0$. Since $\ln (-\vep) $ is transcendental over
$\CC\{\{\vep\}\}[\vep]$ by~\cite[Lemma 3.1]{G-Z}, we have
$$\CC\{\{\vep\}\}[\vep^{-1}] [\ln (-\vep)]\cong
\CC\{\{\vep\}\}[\vep^{-1}] [T],$$ the polynomial algebra with the
variable $T=-\ln(-\vep)$ and with
coefficients in $\CC\{\{\vep\}\}[\vep^{-1}]$
which embeds into the ring of Laurent series
$\CC[T][\vep^{-1},\vep]]$
with coefficients in
$\CC[T]$ by the remark after Lemma 3.2 in~\cite{G-Z}.
Thus we obtain an injective algebra homomorphism
\begin{equation}
u: \CC\{\{\vep\}\}[\vep^{-1}] [T]  \to \CC[T][\vep^{-1},\vep]].
\mlabel{eq:formlau}
\end{equation}

Then with the decomposition
$$ \CC[\vep^{-1},\vep]] = \vep^{-1} \CC[\vep^{-1}] \oplus \CC[[\vep]]$$
of $\CC[\vep^{-1},\vep]]$ into a direct sum of subalgebras, we have the vector space direct sum decomposition
$$\CC[T][\vep^{-1},\vep]] =\CC[T][\vep^{-1},\vep]]
= \vep^{-1} \CC[T] [\vep^{-1}] \oplus \CC[T][[\vep]]$$
of the subalgebras $\vep^{-1} \CC[T] [\vep^{-1}]$ and $\CC[T][[\vep]]$. Thus $\CC[T][\vep^{-1},\vep]]$ is a Rota-Baxter algebra with the Rota-Baxter operator $P$ to be the projection to $\vep^{-1} \CC[T] [\vep^{-1}]$:
$$ P\left ( \sum_{n\geq N} \alpha_k(T) \vep^k\right )=\sum_{k\leq -1}\alpha_k(T) \vep^k.$$
This can also be directly verified as with the case of $\CC[\vep^{-1},\vep]]$ (see~\mcite{C-K1,E-G-K1}).

The following facts are easy to verify.
\begin{prop}
\begin{enumerate}
\item
Define the operator
$$\partial:  \CC[T][\vep^{-1},\vep]] \to \CC[T][\vep^{-1},\vep]]$$
by $\partial(\vep^k)=k\vep^{k-1}$ and $\partial(T^n)=nT^{n-1}/\vep$.
Then $\partial$ is a differential operator on $\CC[T][\vep^{-1},\vep]]$.
\item We have
\begin{equation}
\partial (Z(\wvec{\vec{s}}{\vec{r}};\vep,x))
=\sum_{i=1}^k r_i Z(\wvec{\vec{s}-\vec{e}_i}{\vec{r}};\vep,x),
\mlabel{eq:zdiff}
\end{equation}
where $\vec{e}_i\in \ZZ^{k}$ is the $i$-th unit vector.
\item
$(\CC[\vep^{-1},\vep]],P,\partial)$ is a commutative differential Rota-Baxter algebra in the sense of Definition~\mref{de:diff}.
\end{enumerate}
\mlabel{pp:negalg}
\end{prop}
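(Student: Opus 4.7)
The plan is to handle the three parts in order. For part (i), I extend $\partial$ to monomials $T^n\vep^k$ by the Leibniz rule, obtaining
$$\partial(T^n\vep^k)=(nT^{n-1}+kT^n)\vep^{k-1},$$
and then extend by $\bfk$-linearity. To verify this really is a derivation on the polynomial part, I check the Leibniz identity on an arbitrary product $(T^m\vep^j)(T^n\vep^k)=T^{m+n}\vep^{j+k}$: both $\partial\bigl((T^m\vep^j)(T^n\vep^k)\bigr)$ and $\partial(T^m\vep^j)\cdot T^n\vep^k+T^m\vep^j\cdot\partial(T^n\vep^k)$ reduce to $\bigl((m{+}n)T^{m+n-1}+(j{+}k)T^{m+n}\bigr)\vep^{j+k-1}$. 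Since $\partial$ lowers the $\vep$-degree by exactly one, it is continuous in the Laurent topology and extends uniquely to all of $\CC[T][\vep^{-1},\vep]]$.

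For part (ii), I use the fact that the series defining $Z(\wvec{\vec{s}}{\vec{r}};\vep,x)$ converges absolutely and uniformly on compact subsets of $\{\mathrm{Re}(\vep)<0\}$, so termwise differentiation with respect to $\vep$ is legitimate. For a single factor,
$$\frac{d}{d\vep}\frac{e^{(n_i+x)r_i\vep}}{(n_i+x)^{s_i}}=r_i\,\frac{e^{(n_i+x)r_i\vep}}{(n_i+x)^{s_i-1}},$$
and summing the contributions over $i=1,\ldots,k$ produces Eq.~(\mref{eq:zdiff}) as an identity of meromorphic functions near $\vep=0$. This identity then lifts to the Laurent expansion in $\CC[T][\vep^{-1},\vep]]$ via the embedding $u$ of Eq.~(\mref{eq:formlau}), since $\partial$ is defined so as to match the analytic derivative on the generators $\vep^k$ and on $T=-\ln(-\vep)$ and both sides of (\mref{eq:zdiff}) lie in the image of $u$.

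For part (iii), the fact that $(\CC[\vep^{-1},\vep]],P)$ is a Rota--Baxter algebra of weight $-1$ is classical (\mcite{C-K1,E-G-K1}): the decomposition $\CC[\vep^{-1},\vep]]=\vep^{-1}\CC[\vep^{-1}]\oplus \CC[[\vep]]$ is a direct sum of subalgebras, and $P$ is the projection onto the first summand. Only the commutation $P\circ\partial=\partial\circ P$ remains. On the monomial $\vep^k$ with $k\le -1$, we have $P\vep^k=\vep^k$ and $\partial\vep^k=k\vep^{k-1}$ with $k-1\le -2$, so both $P\partial\vep^k$ and $\partial P\vep^k$ equal $k\vep^{k-1}$. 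For $k\ge 1$, $P\vep^k=0$ while $\partial\vep^k=k\vep^{k-1}\in\CC[[\vep]]$, so $P\partial\vep^k=0$ also. The case $k=0$ is immediate. Linearity and continuity extend the identity to all of $\CC[\vep^{-1},\vep]]$.

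The main obstacle, and the only step requiring care, is the compatibility argument in part (ii): one must be sure that the formal derivation $\partial$ on the Laurent ring faithfully models the analytic derivative $d/d\vep$ on the meromorphic functions $Z(\wvec{\vec{s}}{\vec{r}};\vep,x)$, whose Laurent expansions generically involve the transcendental element $T$. Once that is settled, the whole proposition reduces to termwise differentiation of a convergent series together with routine bookkeeping on monomials.
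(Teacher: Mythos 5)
Parts (i) and (iii) of your proposal are correct, and since the paper offers no proof at all (the proposition is introduced with ``The following facts are easy to verify''), your monomial Leibniz check for $\partial$ and the degree bookkeeping for $P\circ\partial=\partial\circ P$ on $\CC[\vep^{-1},\vep]]$ are exactly the intended routine verifications. Your termwise differentiation in part (ii) is also valid as an identity of analytic functions on $\{{\rm Re}(\vep)<0\}$.

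However, the bridging step you yourself single out as ``the only step requiring care'' contains a genuine error: with the paper's conventions, $\partial$ does \emph{not} match the analytic derivative on $T$. Since $T=-\ln(-\vep)$, one has $\frac{d}{d\vep}T=-\frac{1}{\vep}$, whereas the formal operator is defined by $\partial(T)=+\frac{1}{\vep}$ (the paper's own remark, $P(\partial(T))=P(1/\vep)=1/\vep$, confirms this sign). Hence $u\circ\frac{d}{d\vep}\neq\partial\circ u$ on expansions involving $T$, and Eq.~(\mref{eq:zdiff}), read formally through $u$, fails already for $k=1$, $\vec s=(1)$: indeed $u\big(Z(\wvec{1}{1};\vep)\big)=T-\frac{\vep}{2}+O(\vep^2)$, so $\partial$ applied to it gives $+\frac{1}{\vep}-\frac{1}{2}+O(\vep)$, while $u\big(Z(\wvec{0}{1};\vep)\big)=-\frac{1}{\vep}+\zeta(0)+O(\vep)$. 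So your assertion that ``$\partial$ is defined so as to match the analytic derivative on the generators $\vep^k$ and on $T$'' is false as stated, and it papers over a sign inconsistency that is actually present in the paper's definitions. A careful writeup must either (a) restrict the formal reading of Eq.~(\mref{eq:zdiff}) to the $T$-free situation, i.e.\ to Laurent expansions in $\CC[\vep^{-1},\vep]]$, which covers $\vec s\in\ZZ_{\le 0}^k$ and is the only case the paper subsequently uses (note that $\uni{Z}$ is claimed to be a \emph{differential} algebra homomorphism only after restriction to $\calh_{\frakM^-}\to\CC[\vep^{-1},\vep]]$, and part (iii) is likewise stated only for $\CC[\vep^{-1},\vep]]$), or (b) repair the convention by taking $T=\ln(-\vep)$, equivalently $\partial(T^n)=-nT^{n-1}/\vep$, so that $\partial$ really does model $d/d\vep$ on all of $\CC[T][\vep^{-1},\vep]]$. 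With either fix your argument goes through; without one, the lifting step in your part (ii) does not.
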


\begin{remark}
We note that the differential operator $\partial$ and the Rota-Baxter operator $P$ do not commute in general. For example,
$\partial (P(T))=\partial (0)=0$, but $P(\partial(T))=P(1/\vep)=1/\vep$.
\end{remark}

By \cite[Theorem 3.3]{G-Z},
$$Z(\wvec{\vec s}{\vec r};\vep)=\sum_{n_1>\cdots
>n_k>0} \frac {e^{n_1r_1\vep} e^{n_2r_2\vep} \cdots e^{n_kr_k\vep}}{n_1^{s_1}n_2^{s_2}\cdots
n_k^{s_k}}$$
has a Laurent series expansion in
$\CC [\ln(-\vep)]\{\{\vep\}\}[\vep^{-1}]$
for $\vec s\in \Bbb {Z} ^k$, $\vec r \in \Bbb
{Z}_{>0}^k$ and a Laurent series expansion in
$\CC\{\{\vep\}\}[\vep^{-1}]$
for $\vec s\in \Bbb {Z}_{\le 0}^k$,
$\vec r \in \Bbb {Z}_{>0}^k$.
Combining this with Eq.~(\mref{eq:formlau}), we obtained an algebra homomorphism
$$
\uni{Z}: \calh_\frakM \to \CC[T][\vep^{-1},\vep]],
\quad \uni{Z}(f_{\wvec{\vec{s}}{\vec{r}}})=u(Z(\wvec{\vec{s}}{\vec{r}};\vep))
$$
 which restricts to a differential algebra homomorphism
$$
\uni{Z}: \calh_{\frakM^-} \to \CC[\vep^{-1},\vep]].
$$
Then by the Differential Algebraic Birkhoff Decomposition in Theorem~\mref{thm:diffBirk},
we have

\begin{coro}
There is the unique decomposition
$$ \uni{Z}=\uni{Z}_-^{-1} \star \uni{Z}_+$$
where the map $\uni{Z}_+: \calh_\frakM\to \CC[T][[\vep]]$ is an algebra homomorphism which restricts to a differential algebra homomorphism
$\uni{Z}_+: \calh_{\frakM^-}\to \CC[[\vep]]$.
\mlabel{co:renormz}
\end{coro}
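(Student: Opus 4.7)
The plan is to invoke Theorem~\mref{thm:diffBirk} twice---first in its algebraic form, parts (\mref{it:decom})--(\mref{it:uni}), on the full Hopf algebra $\calh_\frakM$, and then in its differential form, part (\mref{it:diff}), on the sub-Hopf algebra $\calh_{\frakM^-}$---and to stitch the two outputs together via uniqueness.

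First I verify the hypotheses of Theorem~\mref{thm:diffBirk}(\mref{it:decom}) for the triple $(\calh_\frakM,\CC[T][\vep^{-1},\vep]],\uni{Z})$. Theorem~\mref{thm:hopf}(\mref{it:hopf}) supplies the commutative connected filtered Hopf algebra structure on $\calh_\frakM$. The direct sum decomposition
$$\CC[T][\vep^{-1},\vep]]=\vep^{-1}\CC[T][\vep^{-1}]\oplus \CC[T][[\vep]]$$
into subalgebras exhibits the projection $P$ onto the first summand as an idempotent Rota-Baxter operator of weight $-1$. The map $\uni{Z}$ is an algebra homomorphism because on generators $f_{\wvec{\vec s}{\vec r}}$ it is the composition of the multiplicative assignment $f_{\wvec{\vec s}{\vec r}}\mapsto Z(\wvec{\vec s}{\vec r};\vep)$ (whose multiplicativity with respect to $\msh$ is the quasi-shuffle identity for directional regularized MZVs established in~\mcite{G-Z}) with the embedding $u$ of Eq.~(\mref{eq:formlau}). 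Applying Theorem~\mref{thm:diffBirk}(\mref{it:decom})--(\mref{it:uni}) then yields the unique factorization $\uni{Z}=\uni{Z}_-^{\star\,(-1)}\star\uni{Z}_+$ with $\uni{Z}_+$ landing in $\bfk+(\id-P)(\CC[T][\vep^{-1},\vep]])=\CC[T][[\vep]]$; uniqueness is available because $P^2=P$.

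For the differential refinement, I restrict to $\calh_{\frakM^-}$. By~\cite[Thm.~3.3]{G-Z}, for $\vec s\in \ZZ_{\le 0}^k$ the Laurent expansion of $Z(\wvec{\vec s}{\vec r};\vep)$ is free of $T=-\ln(-\vep)$, so $\uni{Z}|_{\calh_{\frakM^-}}$ takes values in $\CC[\vep^{-1},\vep]]$. Theorem~\mref{thm:diffHopf} endows $(\calh_{\frakM^-},d)$ with the structure of a differential Hopf algebra via $d(f_{\wvec{s}{r}})=rf_{\wvec{s-1}{r}}$, and Proposition~\mref{pp:negalg} supplies the commutative differential Rota-Baxter structure on $\CC[\vep^{-1},\vep]]$. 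The compatibility $\partial\circ\uni{Z}=\uni{Z}\circ d$ on generators is precisely the content of Eq.~(\mref{eq:zdiff}); it propagates to all of $\calh_{\frakM^-}$ because both $d$ (by Theorem~\mref{thm:diffHopf}) and $\partial$ obey the Leibniz rule while $\uni{Z}$ is multiplicative. Theorem~\mref{thm:diffBirk}(\mref{it:diff}) then yields a differential Birkhoff factorization of $\uni{Z}|_{\calh_{\frakM^-}}$, which by the uniqueness of~(\mref{it:uni}) must coincide with the restriction of the factorization from the previous paragraph. In particular, $\uni{Z}_+|_{\calh_{\frakM^-}}:\calh_{\frakM^-}\to\CC[[\vep]]$ is a differential algebra homomorphism.

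The main obstacle I anticipate is the differential compatibility $\partial\circ\uni{Z}=\uni{Z}\circ d$ on the restricted algebra: this needs both the absence of logarithmic factors in the Laurent expansion for non-positive weights (so that $\partial$ rather than its $T$-extension is the relevant operator) and a clean match between termwise $\vep$-differentiation of $Z(\wvec{\vec s}{\vec r};\vep)$ and the tensor derivation defined in Theorem~\mref{thm:diffHopf}. Once these ingredients are in hand, everything else is a mechanical application of the two parts of Theorem~\mref{thm:diffBirk}.
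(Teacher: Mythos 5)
Your proposal is correct and follows essentially the same route as the paper, which obtains the corollary by applying Theorem~\ref{thm:diffBirk} to the setup already assembled (Theorems~\ref{thm:hopf} and \ref{thm:diffHopf}, Proposition~\ref{pp:negalg}, and the homomorphism $\uni{Z}$ from \cite[Theorem 3.3]{G-Z} composed with the embedding $u$); your extra care in applying part~(\ref{it:diff}) only on $\calh_{\frakM^-}\to\CC[\vep^{-1},\vep]]$, where $P$ and $\partial$ commute, and in matching the two factorizations via the uniqueness from $P^2=P$ just makes explicit what the paper leaves implicit. One cosmetic remark: the compatibility $\partial\circ\uni{Z}=\uni{Z}\circ d$ needs no Leibniz-rule propagation, since Eq.~(\ref{eq:zdiff}) already holds for arbitrary pure tensors, i.e.\ on a linear basis of $\calh_{\frakM^-}$.
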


By Theorem~3.8 in~\mcite{G-Z},
\begin{equation}
\uni{Z}_+\lp \wvec{s_1,s_2}{r_1,r_2};\vep\rp
=(\id-P) \Big( \uni{Z}\lp \wvec{s_1,s_2}{r_1,r_2};\vep\rp
- P\lp \uni{Z}\lp \wvec{s_1}{r_1};\vep\rp \rp \uni{Z}\lp \wvec{s_2}{r_2};\vep\rp \Big) .
\mlabel{eq:zplus2}
\end{equation}

\begin{defn}
For $\vec{s}=(s_1,\cdots,s_k)\in \ZZ^k$ and $\vec{r}=(r_1,\cdots,r_k)\in \RR_{>0}^k$, define
the {\bf renormalized directional multiple zeta value (MZV)}
\begin{equation} \zeta\lp \wvec{\vec{s}}{\vec{r}}\rp = \lim_{\vep\to 0} \uni{Z}_+\lp \wvec{\vec{s}}{\vec{r}};\vep\rp .
\mlabel{eq:dmzv}
\end{equation}
\end{defn}
The definition makes sense because of Corollary~\mref{co:renormz}.
Furthermore, since $\uni{Z}_+$ is an algebra homomorphism, we have
the quasi-shuffle relation
$$ \zeta\lp \wvec{\vec{s}}{\vec{r}}\rp \zeta\lp \wvec{\vec{s}'}{\vec{r}'}\rp
= \zeta\lp \wvec{\vec{s}}{\vec{r}} \msh \wvec{\vec{s}'}{\vec{r}'} \rp $$
meaning if
$$ \wvec{\vec{s}}{\vec{r}} \msh \wvec{\vec{s}\,'}{\vec{t}\,'} =\sum_{\wvec{\vec{s}\,''}{\vec{t}\,''}} \wvec{\vec{s}\,''}{\vec{t}\,''}$$
is the quasi-shuffle product in $\calh_\frakM$ defined in Eq.~(\mref{eq:msh}) -- (\mref{eq:quasi23}), then
$$ \zeta\lp \wvec{\vec{s}}{\vec{r}}\rp \zeta\lp \wvec{\vec{s}'}{\vec{r}'}\rp
= \sum_{\wvec{\vec{s}\,''}{\vec{t}\,''}}
    \zeta\big(\wvec{\vec{s}\,''}{\vec{t}\,''}\big ).$$

\begin{defn}
For $\vec{s}\in \ZZ_{> 0}^k\cup \ZZ_{\leq 0}^k$,  define
\begin{equation} \gzeta\lp \vec{s}\rp
= \lim_{\delta \to 0^+} \zeta\lp \wvec{\vec{s}}{|\vec{s}|+\delta}\rp ,
\mlabel{eq:gmzv}
\end{equation}
where, for $\vec s=(s_1,\cdots,s_k)$ and $\delta\in \RR_{>0}$, we denote $|\vec{s}|=(|s_1|,\cdots,|s_k|)$ and $|\vec s|+\delta=(|s_1|+\delta,\cdots,|s_k|+\delta).$
These $\gzeta(\vec{s})$ are called the {\bf renormalized MZVs} of the multiple zeta function $\zeta(u_1,\cdots,u_k)$ at $\vec{s}$.
\mlabel{de:rmzv}
\end{defn}

We show in \cite[Theorem4.2]{G-Z} that our renormalized MZVs are
well-defined and are compatible with the known MZVs defined by
either convergence, analytic continuation~\mcite{Zh2} or the
Ihara-Kaneko-Zagier regularization~\mcite{I-K-Z}. It is also proved that the renormalized MZVs
satisfy the quasi-shuffle relation.

We next show that our examples in Section~\mref{sec:mzvs} follows from the above general set up. Let $\Sigma_k$ denote the symmetric group on $k$ letters. For $\sigma\in \Sigma_k$ and $\vec{r}=(r_1,\cdots,r_k)$, denote
$\sigma(\vec{r})=(r_{\sigma(1)},\cdots,r_{\sigma(k)})$ and $f(\vec r)^{(\Sigma_k)}=\sum _{\sigma \in \Sigma_k} f(\sigma (\vec r))$.

\begin {prop} {\rm (\cite[Proposition~4.9]{G-Z})} \mlabel{pp:gzero}
For any $k\geq 1$,
$\zeta\lp\wvec{\vec 0_k}{\vec r}\rp^{(\Sigma_k)}$ is independent of the choice of  $\vec{r}\in \RR_{>0}^k$ and
$$\gzeta (\vec 0_k)
=\frac 1{k!}\zeta\lp\wvec{\vec 0_k}{\vec r}\rp^{(\Sigma_k)}.$$
\end{prop}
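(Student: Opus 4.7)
The plan is to prove the statement by induction on $k$, using the fact that $\zeta$ is an algebra homomorphism from the quasi-shuffle algebra $\calh_\frakM$. The base case $k=1$ is immediate: $\Sigma_1$ is trivial and $\zeta\lp\wvec{0}{r}\rp = \zeta(0)$ for every $r > 0$, as is seen from the scaling $Z\lp\wvec{0}{r};\vep\rp = Z\lp\wvec{0}{1};r\vep\rp$ together with the observation that the finite part of the Laurent expansion at $\vep=0$ is invariant under $\vep \mapsto r\vep$.

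The heart of the argument is the combinatorial identity
$$\wvec{0}{r_1} \msh \wvec{0}{r_2} \msh \cdots \msh \wvec{0}{r_k} = \sum_{j=1}^{k} \sum_{(B_1,\ldots,B_j)} \wvec{\vec{0}_j}{r_{B_1},\ldots,r_{B_j}},$$
where the inner sum runs over ordered set partitions $(B_1,\ldots,B_j)$ of $\{1,\ldots,k\}$ and $r_B := \sum_{l\in B} r_l$. This identity is proved by induction on $k$ from the recursive definition of $\msh$ in Eq.~(\mref{eq:msh}), using the semigroup law $\wvec{0}{r_i}\cdot \wvec{0}{r_l} = \wvec{0}{r_i+r_l}$ in $\frakM$ to describe the merge step. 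Applying $\zeta$ to both sides and separating off the $j=k$ term, whose ordered partitions are exactly the singleton partitions indexed by $\Sigma_k$, yields
$$\zeta(0)^k = \zeta\lp\wvec{\vec{0}_k}{\vec{r}}\rp^{(\Sigma_k)} + \sum_{j=1}^{k-1} \sum_{(B_1,\ldots,B_j)} \zeta\lp\wvec{\vec{0}_j}{r_{B_1},\ldots,r_{B_j}}\rp.$$

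To eliminate the lower-order terms, I group them by their underlying unordered set partition. Each unordered partition of $\{1,\ldots,k\}$ into $j$ blocks produces exactly $j!$ ordered partitions, since the blocks are distinct as sets, and the corresponding $j!$ terms sum to $\zeta\lp\wvec{\vec{0}_j}{s_1,\ldots,s_j}\rp^{(\Sigma_j)}$, where $s_i = r_{B_i}$ are the block sums. By the induction hypothesis this symmetrization equals $j!\,\gzeta(\vec{0}_j)$ and is therefore independent of $(s_1,\ldots,s_j)$; summing over unordered partitions gives a total contribution that depends only on $k$ and on $\gzeta(\vec 0_1),\ldots,\gzeta(\vec 0_{k-1})$, and not on $\vec{r}$. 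Hence $\zeta\lp\wvec{\vec{0}_k}{\vec{r}}\rp^{(\Sigma_k)}$ itself is $\vec{r}$-independent. Finally, specializing to $\vec{r} = (\delta,\ldots,\delta)$ collapses the $\Sigma_k$-sum to $k!\, \zeta\lp\wvec{\vec{0}_k}{(\delta,\ldots,\delta)}\rp$, which is therefore constant in $\delta>0$; passing $\delta\to 0^+$ and invoking Definition~\mref{de:rmzv} delivers $k!\,\gzeta(\vec{0}_k) = \zeta\lp\wvec{\vec{0}_k}{\vec{r}}\rp^{(\Sigma_k)}$.

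The main technical obstacle is establishing the displayed expansion of $\wvec{0}{r_1}\msh \cdots\msh \wvec{0}{r_k}$ as a sum over ordered set partitions with the correct aggregation rule for the $r_i$; once this is verified, the rest is bookkeeping together with the quasi-shuffle relation for $\zeta$. One subtle point worth noting is that when several blocks have equal sizes the $j!$ orderings of a fixed unordered partition may produce repeated argument vectors $(s_1,\ldots,s_j)$, but this is harmless since the induction hypothesis gives a constant value for the $\Sigma_j$-symmetrization regardless of any coincidences among the $s_i$.
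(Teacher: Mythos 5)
Your proposal is correct. One caveat: this paper contains no proof of the proposition to compare against --- it is quoted from the companion paper \cite[Proposition~4.9]{G-Z} --- but your route is the natural (and, in essence, the same) symmetrization argument: the expansion of $\wvec{0}{r_1}\msh\cdots\msh\wvec{0}{r_k}$ over ordered set partitions with merged directions $r_B=\sum_{l\in B}r_l$ (and $s$-entries staying $0$, by the semigroup law of $\frakM$) is the standard description of a quasi-shuffle of depth-one letters, and together with the quasi-shuffle relation for the renormalized directional MZVs stated after Corollary~\mref{co:renormz}, the $r$-independence $\zeta\lp\wvec{0}{r}\rp=\zeta(0)$ visible from Eq.~(\mref{eq:gen0}), the induction on depth, and the specialization $\vec r=(\delta,\cdots,\delta)$, it yields both the $\vec r$-independence and the formula $k!\,\gzeta(\vec 0_k)=\zeta\lp\wvec{\vec 0_k}{\vec r}\rp^{(\Sigma_k)}$; your handling of the subtlety with equal block sums (blocks are distinct as sets, so each unordered partition contributes exactly $j!$ ordered ones) is also sound.
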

\begin{remark} {\rm
Taking $k=2$ and $\vec{r}=(1,1)$ in the above proposition, we have
$\gzeta(0,0)=\zeta\big ( \wvec{0,0}{1,1}\big)$. Then by
Eq.~(\mref{eq:zplus2}) and Eq.~(\mref{eq:dmzv}),
we obtain the renormalized value for $\zeta(0,0)$
discussed in Section~\mref{sec:mzvs}.
}
\mlabel{re:two0}
\end{remark}

\subsection{Differential structures on renormalized multiple zeta values}
\mlabel{sec:diff}

We now consider some differential properties of the renormalized MZVs.
By Eq.~(\mref {eq:zreg}),
$$
\uni{Z}(\wvec{0}{r};\vep)=-\frac 1{r\vep}+\sum _{i=0}^{\infty} \zeta
(-i)\frac {(r\vep )^i}{i!}.
$$
So
\begin{equation}
\uni{Z}_+(\wvec{0}{r};\vep)=\sum _{i=0}^{\infty} \zeta
(-i)\frac {(r\vep )^i}{i!}
\mlabel{eq:gen0}
\end{equation}
is the generating function for $\zeta (s)$, $s\in \ZZ _{\le
0}$. We next generalize this to multiple variables.

\begin{theorem}
\begin{equation}
\uni{Z}_+\big(\wvec{\vec{0}_k}{\vec{r}};\vep\big)
= \sum_{n\geq 0} \sum_{i_1+\cdots+i_k=n}
\zeta\big(\wvec{-(i_1,\cdots,i_k)}{\vec{r}}\big)
\frac{(r_1\vep)^{i_1}}{i_1!}\cdots \frac{(r_k\vep)^{i_k}}{i_k!},
\mlabel{eq:geng}
\end{equation}
where the sum runs over ordered partitions of $n$.
\mlabel{eq:neggen}
\end{theorem}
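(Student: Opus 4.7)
The plan is to compute both sides of the identity as Taylor series in $\vep$ around $0$ and match coefficients, using the fact, from Corollary~\mref{co:renormz}, that $\uni{Z}_+\colon \calh_{\frakM^-}\to \CC[[\vep]]$ is a \emph{differential} algebra homomorphism, hence intertwines the differential $d$ on $\calh_{\frakM^-}$ (Theorem~\mref{thm:diffHopf}) with the differential $\partial$ on $\CC[[\vep]]$ (which, on series without $T$, is simply $d/d\vep$).

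First I would compute the iterated derivative $d^n$ applied to the pure tensor $\fraka=\wvec{0}{r_1}\mot\cdots\mot \wvec{0}{r_k}\in\calh_{\frakM^-}$. By Theorem~\mref{thm:diffHopf}, $d$ acts as a Leibniz derivation on tensors, so a standard induction (multinomial expansion, identical to the one for $d^n$ on a product in any commutative differential algebra) gives
\begin{equation*}
d^n(\fraka)=\sum_{i_1+\cdots+i_k=n}\binom{n}{i_1,\ldots,i_k}
 d^{i_1}\wvec{0}{r_1}\mot\cdots\mot d^{i_k}\wvec{0}{r_k}.
\end{equation*}
Since on a single factor $d\wvec{s}{r}=r\wvec{s-1}{r}$, iteration yields $d^{i_j}\wvec{0}{r_j}=r_j^{i_j}\wvec{-i_j}{r_j}$, so
\begin{equation*}
d^n(\fraka)=\sum_{i_1+\cdots+i_k=n}\binom{n}{i_1,\ldots,i_k}
 r_1^{i_1}\cdots r_k^{i_k}\,\wvec{-(i_1,\ldots,i_k)}{\vec{r}}.
\end{equation*}

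Next, applying $\uni{Z}_+$ and using the intertwining identity $\partial\circ\uni{Z}_+=\uni{Z}_+\circ d$ iterated $n$ times, I get
\begin{equation*}
\partial^n\,\uni{Z}_+\bigl(\wvec{\vec{0}_k}{\vec{r}};\vep\bigr)
=\sum_{i_1+\cdots+i_k=n}\binom{n}{i_1,\ldots,i_k}r_1^{i_1}\cdots r_k^{i_k}\,\uni{Z}_+\bigl(\wvec{-(i_1,\ldots,i_k)}{\vec{r}};\vep\bigr).
\end{equation*}
Because $\uni{Z}_+$ maps $\calh_{\frakM^-}$ into $\CC[[\vep]]$, every term on both sides is a formal power series in $\vep$; evaluating at $\vep=0$ turns $\partial^n$ into $n!$ times the $n$th Taylor coefficient on the left, and turns each $\uni{Z}_+\wvec{-(i_1,\ldots,i_k)}{\vec{r}};\vep)$ on the right into the renormalized value $\zeta\wvec{-(i_1,\ldots,i_k)}{\vec{r}}$ by Definition~(\mref{eq:dmzv}).

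Finally I would assemble the Taylor expansion
\begin{equation*}
\uni{Z}_+\bigl(\wvec{\vec{0}_k}{\vec{r}};\vep\bigr)
=\sum_{n\geq 0}\frac{\vep^n}{n!}\bigl[\partial^n\uni{Z}_+\bigl(\wvec{\vec{0}_k}{\vec{r}};\vep\bigr)\bigr]_{\vep=0}
\end{equation*}
and substitute the expression obtained above, noting that $\frac{1}{n!}\binom{n}{i_1,\ldots,i_k}=\frac{1}{i_1!\cdots i_k!}$, which regroups the prefactor with $r_1^{i_1}\cdots r_k^{i_k}\vep^n$ to give exactly the factors $\prod_j(r_j\vep)^{i_j}/i_j!$ in~(\mref{eq:geng}). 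I expect no real obstacle here; the only point requiring care is confirming that the image of $\uni{Z}_+$ restricted to $\calh_{\frakM^-}$ is genuinely in $\CC[[\vep]]$ (no $T$-dependence), so that Taylor expansion at $\vep=0$ is legitimate and $\partial$ acts as honest differentiation with respect to $\vep$---this is exactly the content of the differential-Birkhoff refinement in Corollary~\mref{co:renormz}, which was the reason for developing the differential framework in \S\mref{sec:setup}.
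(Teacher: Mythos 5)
Your proposal is correct and follows essentially the same route as the paper's proof: both use the fact that $\uni{Z}_+$ is a differential algebra homomorphism (Corollary~\mref{co:renormz}) together with $d\wvec{s}{r}=r\wvec{s-1}{r}$ to obtain, by the multinomial induction, the formula for the $n$-th derivative, then evaluate at $\vep=0$ so that the Taylor coefficients become the renormalized values $\zeta\big(\wvec{-(i_1,\cdots,i_k)}{\vec{r}}\big)$ via Eq.~(\mref{eq:dmzv}). The only cosmetic difference is that you compute $d^n$ upstairs in the Hopf algebra and then push forward by $\uni{Z}_+$, whereas the paper differentiates $\uni{Z}_+\big(\wvec{\vec{s}}{\vec{r}};\vep\big)$ directly in $\vep$; these are identical by the intertwining relation you cite.
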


\begin{proof}
Let
$$\uni{Z}_+(\wvec{\vec{s}}{\vec{r}};\vep)=\sum _{n\ge 0}a_n \frac {\vep
^n}{n!}.$$
We find a
formula for $a_n$. Of course
$a_n$ is the constant term of
$$\uni{Z}_+ ^{(n)}(\wvec{\vec{s}}{\vec{r}};\vep)
: = \frac{d^n}{d \vep^n}\uni{Z}_+ (\wvec{\vec{s}}{\vec{r}};\vep).$$
But since $\uni{Z}_+$ is a differential algebra homomorphism by Corollary~\mref{co:renormz}, we have
$$
\frac{d}{d\vep}\uni{Z}_+(\wvec{\vec{s}}{\vec{r}};\vep)=\sum _{i=1}^k
r_i \uni{Z}_+(\wvec{\vec{s}-\vec{1}_i}
{\vec{r}};\vep)
$$
where $\vec{e}_i$ is the $i$-th unit vector of length $k$.
By an inductive argument, we have in general
$$\uni{Z}_+ ^{(n)}(\wvec{\vec{s}}{\vec{r}};\vep)=\sum _{i_1+\cdots+i_k=n} \binc
{n}{i_1,\cdots,i_k}r_1^{i_1}\cdots r_k^{i_k}\uni{Z}_+(\wvec{\vec{s}-(i_1,\cdots,i_k)}
{\vec{r}};\vep)
$$
where the sum runs over ordered partitions of $n$.
Thus we have
\begin{eqnarray*}
a_n &=& \uni{Z}^{(n)}_+ \big(\wvec{\vec{s}}{\vec{r}}; 0 \big)\\
&& = \sum _{i_1+\cdots+i_k=n} \binc
{n}{i_1,\cdots,i_k}r_1^{i_1}\cdots r_k^{i_k}\uni{Z}_+(\wvec{\vec{s}-(i_1,\cdots,i_k)}
{\vec{r}};0) \\
&& = \sum _{i_1+\cdots+i_k=n} \binc
{n}{i_1,\cdots,i_k}r_1^{i_1}\cdots r_k^{i_k}\zeta\big(\wvec{\vec{s}-(i_1,\cdots,i_k)}
{\vec{r}}\big)
\end{eqnarray*}
Therefore we have the generating function
\begin{eqnarray*}
\uni{Z}_+\big(\wvec{\vec{s}}{\vec{r}}\big)
&=&
\sum_{n\geq 0} \sum _{i_1+\cdots+i_k=n} \binc
{n}{i_1,\cdots,i_k}r_1^{i_1}\cdots r_k^{i_k}\zeta\big(\wvec{\vec{s}-(i_1,\cdots,i_k)}{\vec{r}}\big) \frac{\vep^n}{n!}\\
&& = \sum_{n\geq 0} \sum_{i_1+\cdots+i_k=n}
\zeta\big(\wvec{\vec{s}-(i_1,\cdots,i_k)}{\vec{r}}\big)
\frac{r_1^{i_1}}{i_1!}\cdots \frac{r_k^{i_k}}{i_k!} \vep^n\\
&&= \sum_{n\geq 0} \sum_{i_1+\cdots+i_k=n}
\zeta\big(\wvec{\vec{s}-(i_1,\cdots,i_k)}{\vec{r}}\big)
\frac{(r_1\vep)^{i_1}}{i_1!}\cdots \frac{(r_k\vep)^{i_k}}{i_k!}
\end{eqnarray*}
This proves Eq.~(\mref{eq:geng}).
\end{proof}

We also consider the regularized series before the renormalization.
In the one variable case, we have
$$
\uni{Z}(\wvec{0}{r};\vep)=-\frac 1{r\vep}+\sum _{i=0}^{\infty} \zeta
(-i)\frac {(r\vep )^i}{i!}
$$
For the two variable case, we have
$$\uni{Z}(\wvec{0,0}{r_1,r_2};\vep)=P_0+\sum _{n\ge 0}a_n \frac {\vep
^n}{n!}$$ where $P_0$ is the negative power part. Let us find a
formula for $a_n$. First by $n$-th derivative, we have
$a_n=$constant term of $\uni{Z} ^{(n)}(\wvec{s_1,s_2}{r_1,r_2};\vep)$.
But
$$\uni{Z} ^{(n)}(\wvec{0,0}{r_1,r_2};\vep)=\sum _{i=0}^n \binc
{n}{i}r_1^ir_2^{n-i}\uni{Z}(\wvec{-i,i-n}{r_1,r_2};\vep),
$$
and, by \cite[Proposition~4.8]{G-Z}, the constant term of $\uni{Z}(\wvec{-i,i-n}{r_1,r_2};\vep)$ is
$$\zeta\lp \wvec{-i,i-n}{r_1,r_2}\rp+(-1)^{i-1}\frac 1{i+1}(\frac
{r_2}{r_1})^{i+1}\zeta (-n-1).$$
So
\begin{eqnarray*}
a_n&=&\sum _{i=0}^n \binc
{n}{i}r_1^ir_2^{n-i}\zeta\lp \wvec{-i,i-n}{r_1,r_2}\rp+\big(\sum
_{i=0}^n \binc {n}{i}(-1)^{i-1}\frac 1{i+1}\big)\frac
{r_2^{n+1}}{r_1}\zeta (-n-1)
\\
&=& \sum _{i=0}^n \binc
{n}{i}r_1^ir_2^{n-i}\zeta\lp \wvec{-i,i-n}{r_1,r_2}\rp-\frac
{r_2^{n+1}}{r_1}\frac {\zeta (-n-1)}{n+1}
\end{eqnarray*}


\end{document}